\definecolor{hot}{RGB}{65,105,225}
\newcommand{\C}{\mathbb{C}}
\theoremstyle{plain}
\newtheorem{theorem}{Theorem}[section]
\newtheorem{proposition}[theorem]{Proposition}
\newtheorem{lm}[theorem]{Lemma}
\newtheorem{corollary}[theorem]{Corollary}
\newtheorem{lemma}[theorem]{Lemma}
\newtheorem{thrm}[theorem]{Theorem}
\theoremstyle{definition}
\newtheorem{remark}[theorem]{Remark}
\newtheorem{ex}[theorem]{Example}
\newtheorem*{ex*}{Example}
\def\be{\begin{equation}}
\def\ee{\end{equation}}
\def\bt{\begin{thrm}}
\def\et{\end{thrm}}
\def\bc{\begin{cor}}
\def\ec{\end{cor}}
\def\br{\begin{rmk}}
\def\er{\end{rmk}}
\def\bp{\begin{prop}}
\def\ep{\end{prop}}
\def\bl{\begin{lm}}
\def\el{\end{lm}}
\def\bex{\begin{ex}}
\def\eex{\end{ex}}
\def\bd{\begin{defn}}
\def\ed{\end{defn}}
\newcommand\sH{{\mathcal H}}
\newcommand\sF{{\mathcal F}}
\newcommand\sL{\mathcal{L}}
\newcommand\pp{{\mathbb{P}}}
\DeclareMathOperator{\reg}{reg}                  % reg
\DeclareMathOperator{\id}{id}                    % id
\DeclareMathOperator{\MLdeg}{MLdeg}
\def\tt{\mathsf{t}}
\def\pp{\mathsf{p}}
\def\uu{\mathsf{u}}
\def\bC{\mathbb{C}}
\def\bP{\mathbb{P}}
\def\cH{\mathcal{H}}
\def\bL{\mathbf{L}}
\def\cU{\mathcal{U}}
\def\cO{\mathcal{O}}
\def\lra{\longrightarrow}
\def\bQ{\mathbb{Q}}
\def\bZ{\mathbb{Z}}
\def\bs{\mathbf{s}}
\def\bE{\mathbf{E}}
\def\bF{\mathbf{F}}
\newcolumntype{H}{>{\setbox0=\hbox\bgroup}c<{\egroup}@{}} %% Hide column of table
\title[Huh-Sturmfels Conjecture]{Logarithmic cotangent bundles, Chern-Mather classes, and the  Huh-Sturmfels Involution conjecture} 
\author{Laurentiu G. Maxim}
\address{Department of Mathematics,         University of Wisconsin-Madison,  480 Lincoln Drive, Madison WI 53706-1388, USA.}
\email {maxim@math.wisc.edu}\urladdr{https://www.math.wisc.edu/~maxim/}
\author{Jose Israel Rodriguez}
\address{Department of Mathematics,         University of Wisconsin-Madison,  480 Lincoln Drive, Madison WI 53706-1388, USA.}
\email {jose@math.wisc.edu}\urladdr{http://www.math.wisc.edu/~jose/}
\author{Botong Wang}
\address{Department of Mathematics,         University of Wisconsin-Madison,  480 Lincoln Drive, Madison WI 53706-1388, USA.}
\email {wang@math.wisc.edu}\urladdr{http://www.math.wisc.edu/~wang/}
\author{Lei Wu}
\address{Department of Mathematics, KU Leuven, 
Celestijnenlaan 200B
B-3001 Leuven, Belgium}
\email {lei.wu@kuleuven.be}\urladdr{https://sites.google.com/view/leiwuswebsite/}
\keywords{ML degree, sectional ML degree, ML bidegree, local Euler obstruction, Chern-Mather classes, CSM classes, logarithmic cotangent bundle, lagrangian cycle, involution formula}
\subjclass[2020]{14B05,14C17,57R20,	90C26}
\begin{document}

\date{\today}

\maketitle

\begin{abstract}  
Using compactifications in the logarithmic cotangent bundle, we obtain a formula for the Chern classes of the pushforward of Lagrangian cycles under an open embedding with normal crossing complement. This generalizes earlier results of Aluffi and Wu-Zhou. The first application of our formula is a geometric description of Chern-Mather classes of an arbitrary very affine variety, generalizing earlier results of Huh which held under the smooth and sch\"on assumptions. As the second application, we confirm 
an involution formula relating sectional maximum likelihood (ML) degrees and ML bidegrees, which was conjectured by Huh and Sturmfels in 2013. 
\end{abstract}

% \tableofcontents

%%%%%%%%%%%%%%%%%%%%%%%%%%%%%%%

\section{Introduction}\label{intro}

Maximum likelihood estimation in statistics leads to the problem of finding the critical points of a likelihood function on an algebraic variety. The number of critical points of a general likelihood function is called the \emph{maximum likelihood (ML) degree} of the algebraic variety. It is well-known that the ML degree is closely related to the topology of the variety. More precisely, if the variety is smooth, its ML degree can be computed as the Euler characteristic of an open subset of the variety, and if the variety is singular, the ML degree can be computed as the Euler characteristic of a certain constructible function which measures the complexity of singularities (see \cite{Huh}, \cite{RW1} and Corollary \ref{cor_RW}). 

Some natural generalizations of the ML degree are the ML bidegrees and the  sectional ML degrees. In \cite{HS}, Huh and Sturmfels conjectured that the ML bidegrees and the sectional ML degrees of a variety determine each other under some involution formulas, and proved the case when the variety is smooth and Sch\"on. The main goal of this paper is to confirm their Involution Conjecture in full generality. 

If a variety is smooth and Sch\"on, Huh proved that its Chern-Schwartz-MacPherson (CSM) classes determine the ML bidegrees (\cite[Theorem 2]{Huh}, see also \cite[page 101]{HS}). So the conjecture follows in this case from Aluffi's involution formula which relates the Euler characteristics of general linear sections of a projective variety to its CSM class (\cite[Theorem 1.1]{Alu2}). To generalize Huh's result to arbitrary varieties, we need to establish a similar relation between CSM classes and the ML bidegrees, which we achieve in Theorem \ref{thm_MLb}. In fact, we show that the correct substitute for the CSM classes of a smooth very affine variety are the Chern-Mather classes, i.e., the CSM class associated to the local Euler obstruction function. The key step in the proof of Theorem \ref{thm_MLb} is a new geometric formula computing the CSM classes of any constructible function on a smooth quasi-projective variety (see Theorem \ref{thm_main}). Our geometric formula involves logarithmic conic Lagrangian cycles, and reduces to earlier works of Ginsburg (\cite[Theorem 3.2]{Gin}). 

\medskip

Let $X$ be a smooth complex algebraic variety, and let $D\subset X$ be a normal crossing divisor. Denote the complement $X\setminus D$ by $U$, and let $j:U\hookrightarrow X$ be the open  inclusion. Let $\Omega_X^1(\log D)$ be the sheaf of algebraic one-forms with logarithmic poles along $D$, and denote the total space of the corresponding vector bundle by $T^*(X, D)$. Clearly, $T^*(X, D)$ contains $T^*U$ as an open subset. Given a conic Lagrangian cycle $\Lambda$ in $T^*U$, we denote its closure in $T^*(X, D)$ by $\overline\Lambda^{\log}$. 

Recall that, by using a microlocal interpretation of MacPherson's Chern class transformation (see \cite{Gin, AMSS}), one can associate Chern classes $c_i^E(\Lambda) \in A_i(X)$, $i=1,\ldots,k$, in Chow homology to any conic irreducible $k$-dimensional subvariety  of a rank $k$ vector bundle $E$ on $X$ (see Section \ref{sec:csm} for a brief description of this construction). With these notations, we prove the following.

\begin{theorem}\label{thm_main}
Let $\sF^\centerdot$ be any constructible sheaf complex  on $U$. Then 
\[
c^{T^*(X, D)}_*\Big(\overline{CC(\sF^\centerdot)}^{\log}\Big)=c^{T^*X}_*\big(CC(Rj_*\sF^\centerdot)\big) \in A_*(X),
\]
where $CC(-)$ denotes the characteristic cycle of $-$, and if $CC(\sF^\centerdot)=\sum_{k}n_k\Lambda_k$, then $\overline{CC(\sF^\centerdot)}^{\log}\coloneqq\sum_{k}n_k\overline{\Lambda_k}^{\log}$.
\end{theorem}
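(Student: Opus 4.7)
The plan is to reduce the claimed identity to a base case in which $\sF^\centerdot$ is a local system on $U$ and $D$ is a simple NCD, and then verify that base case using an Aluffi-type formula for the log Chern class of the zero section. To begin, both sides are additive functors from the Grothendieck group $K_0(D^b_c(U))$ to $A_*(X)$: the assignment $\sF^\centerdot\mapsto c^{T^*X}_*(CC(Rj_*\sF^\centerdot))$ is additive on distinguished triangles, and its log analogue $\sF^\centerdot\mapsto c^{T^*(X, D)}_*(\overline{CC(\sF^\centerdot)}^{\log})$ inherits additivity from that of $CC$, of the log-closure operation, and of the Chern-class recipe recalled in Section~\ref{sec:csm}. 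By a standard devissage I would first reduce to $\sF^\centerdot=\mathbb{C}_V$ for an irreducible smooth locally closed subvariety $V\subset U$, and then stratify further to reduce to the case of a local system $\sL$ on a Zariski open subvariety of $U$.

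Next, to put $V$ in good position with respect to $D$, choose a log resolution $\pi:\tilde X\to X$ of the pair $(X,\overline V\cup D)$ that is an isomorphism over $U$ and such that $\tilde D:=(\pi^{-1}(\overline V\cup D))_{\mathrm{red}}$ is a simple normal crossing divisor on $\tilde X$. Writing $\tilde j:U\hookrightarrow\tilde X$, we have $Rj_*\sF^\centerdot=R\pi_*(R\tilde j_*\sF^\centerdot)$, and Kashiwara's theorem on compatibility of $CC$ with proper pushforward identifies the right-hand side of the theorem with the $\pi_*$-pushforward of the corresponding class on $\tilde X$. To run the same reduction on the left-hand side I need a parallel compatibility: the proper pushforward $\pi_*:A_*(\tilde X)\to A_*(X)$ should intertwine $c^{T^*(\tilde X, \tilde D)}_*$ with $c^{T^*(X, D)}_*$ and send log closures over $\tilde X$ to log closures over $X$. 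Granting this, an induction on $\dim\overline V$ reduces the theorem to the case $V=U$, i.e., to that of a local system on $U$ with $D$ a simple NCD.

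For the base case, if $\sL$ has rank $r$ then $CC(\sL)$ is $(-1)^{\dim X}r$ times the zero section of $T^*U$, so its log closure is the same multiple of the zero section of $T^*(X, D)$. Applying the Chern-class recipe of Section~\ref{sec:csm} to the zero section of a rank-$k$ vector bundle $E$ on $X$ yields $c(E^\vee)\cap[X]$, so the left-hand side becomes $(-1)^{\dim X}r\cdot c(T_X(-\log D))\cap[X]$, where $T_X(-\log D)=\Omega^1_X(\log D)^\vee$. For the right-hand side, since the link of a NCD stratum of codimension $k$ is homotopic to a $k$-torus, an elementary calculation gives $\chi(Rj_*\sL)|_D\equiv 0$, so the Euler-characteristic constructible function of $Rj_*\sL$ is $r\cdot\mathbf{1}_U$. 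Combining the microlocal identity $c^{SM}=(-1)^{\dim X}c^{T^*X}_*\circ CC$ of Ginsburg \cite{Gin} with Aluffi's log CSM formula $c^{SM}(\mathbf{1}_U)=c(T_X(-\log D))\cap[X]$ yields the same class, proving the identity in the base case.

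The hard part will be the compatibility claim in Step 2. The log cotangent bundles on $\tilde X$ and $X$ are linked only birationally through the exceptional divisors of $\pi$, and the log closure of a conic Lagrangian cycle in $T^*(\tilde X, \tilde D)$ can differ from the $\pi_*$-pushforward of its image in $T^*(X, D)$ by contributions supported on the exceptional locus. I would attack this either by explicit local analysis in log coordinates along the exceptional divisors, or, more uniformly, by recasting the Chern-class construction through a Segre-class identity in a projective compactification $\pp(T^*(\cdot,\cdot)\oplus\sO)$ and appealing to the projection formula to control the exceptional contributions.
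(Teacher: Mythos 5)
Your base case is fine --- for a rank $r$ local system on the complement of a simple normal crossing divisor the statement reduces to the zero-section computation $c_{SM}(1_U)=c(T_X(-\log D))\cap [X]$, which is exactly the case the paper's remark attributes to Aluffi \cite{Al1, Al2} --- but the reduction to that base case is where the proof lives, and it is not established. The entire weight of the argument rests on your ``compatibility claim in Step 2'': that for a log resolution $\pi:\tilde X\to X$ the proper pushforward $\pi_*$ intertwines $c_*^{T^*(\tilde X,\tilde D)}$ with $c_*^{T^*(X,D)}$ \emph{and} sends log closures of conic Lagrangian cycles upstairs to log closures downstairs. You acknowledge this is the hard part and offer only strategies, but this is not a routine verification: the natural map of bundles goes the wrong way ($\pi^*T^*(X,D)\to T^*(\tilde X,\tilde D)$, by pullback of log forms), so cycles must be moved through a correspondence, and the log closure upstairs generically differs from anything pulled back from downstairs by components supported over the exceptional locus; deciding that these exceptional contributions do not affect the Chern classes is essentially a statement of the same nature and difficulty as Theorem \ref{thm_main} itself. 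In short, the proposal defers the theorem to an unproven lemma rather than proving it.

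There are also problems with the devissage itself. A log resolution of $(X,\overline V\cup D)$ that is an isomorphism over $U$ generally does not exist when $\overline V$ has singularities or bad tangency inside $U$; and if you do blow up inside $U$, or if you put the transform of $\overline V$ into the boundary $\tilde D$, then the relevant open set is no longer $U$, so the statement you would be proving on $\tilde X$ concerns a different pair and a different $j_*$, and relating it back to the original pair again requires the missing pushforward compatibility plus a comparison of log cotangent bundles for two different boundary divisors. So the induction ``on $\dim\overline V$'' does not close as described. For contrast, the paper avoids devissage entirely and works with an arbitrary conic Lagrangian cycle $\Lambda\subset T^*U$ at once: it globalizes Ginsburg's graph construction using the bundle $\bE=\cO_X(D_1)\oplus\cdots\oplus\cO_X(D_r)$ with its tautological section, shows by a transversality argument (Lemma \ref{lm:adclem}, Corollary \ref{cor_psharp}) that $\overline{\Lambda^\sharp}=p^{-1}(\overline\Lambda^{\log})$, invokes the open-embedding formula $CC(Rj_*\sF^\centerdot)=\sum_k n_k\,\iota^*\overline{\Lambda_k^\sharp}$ of Ginsburg and Franecki--Kapranov (Theorem \ref{thm_GFK}), and concludes by the functoriality of Chern classes of conic cycles under surjective bundle maps (Lemma \ref{lemma_functoriality}); no resolution of singularities or Grothendieck-group reduction is needed. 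To salvage your outline you would have to actually prove the log Verdier--Riemann--Roch type compatibility you postulate, which currently is the gap.
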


\begin{remark}
When the conic Lagrangian cycle $\Lambda$ is equal to the zero section of $T^*U$, this result is well known, e.g., see \cite{Al1, Al2}. For an arbitrary conic Lagrangian cycle, the equality of the top Chern class in Theorem \ref{thm_main} was proved by Zhou and the last author in \cite{WZ}. 
\end{remark}

Recall that an affine variety $Z$ is called {\it very affine}, if it admits a closed embedding to an affine torus $(\C^*)^n$ for some $n$. In this paper, for a very affine variety we always assume that such a closed embedding is chosen. A \emph{master function}\footnote{Such master function is also called a likelihood function in \cite{Huh}. In this paper, we reserve the notion of likelihood function for the ones also containing the factor $(1-x_1-\cdots -x_n)^{u_0}$, which is consistent with the conventions in \cite{HS}.} on $(\C^*)^n$ is of the form 
\[
m_{\mathbf{w}}\coloneqq x_1^{w_1}\cdots x_n^{w_n},
\]
where $(x_1,\ldots,x_n)$ are the coordinate functions on $(\C^*)^n$ and  $\mathbf{w}=(w_1, \ldots, w_n)\in \bZ^n$. If, more generally, $\mathbf{w}\in \C^n$, then $m_{\mathbf{w}}$ is a multivalued function. Nevertheless, the critical points of $m_{\mathbf{w}}|_{Z_{\textrm{reg}}}$ are well defined, where $Z_{\textrm{reg}}$ denotes the smooth locus of $Z$. In fact, the critical points do not depend on the choice of local branches of the function, and they are equal to the degeneration points of the restriction of the holomorphic 1-form 
\[
d\log m_{\mathbf{w}}=w_1\frac{dx_1}{x_1}+\cdots +w_n\frac{dx_n}{x_n}
\]
to $Z_{\textrm{reg}}$. The total space of all critical points of the master functions defines a closed subvariety of $Z_{\textrm{reg}}\times \bC^{n}$:
\[
\mathfrak{X}^\circ(Z)=\{(z, \mathbf{w}) \in Z_{\textrm{reg}}\times \C^n\mid z \text{ is a critical point of } m_{\mathbf{w}}|_{Z_{\textrm{reg}}}\}.
\]
Using the natural compactifications $(\C^*)^n\subset \bP^n$ and $\C^n\subset \bP^n$, we can consider $Z_{\textrm{reg}}\times \C^n$ as a locally closed subvariety of $\bP^n\times \bP^n$. Let $\mathfrak{X}(Z)$ be the closure of $\mathfrak{X}^\circ(Z)$ in $\bP^n\times \bP^n$. 

As the first application of Theorem \ref{thm_main}, we prove a geometric formula relating the Chern-Mather classes of $Z$ and the bidegrees of $\mathfrak{X}(Z)$, generalizing \cite[Theorem 1.2]{Huh}.

\begin{theorem}\label{thm_CSM}
Given a very affine variety $Z\subset (\bC^*)^n$ of dimension $d$, let $\mathfrak{X}(Z)$ be defined as above. Then
\[
[\mathfrak{X}(Z)]=\sum_{i=0}^{d}v_i[\bP^{i}\times \bP^{n-i}]\in A_*(\bP^n\times \bP^{n}),
\]
where 
\[
c_{Ma}(Z)=\sum_{i=0}^d(-1)^{d-i} v_i [\bP^{i}]\in A_*(\bP^n).
\]
Here, the Chern-Mather class $c_{Ma}(Z)$ is defined as $c_*(Eu_Z)$, where $c_*$ is the MacPherson Chern class transformation and $Eu_Z$ is the local Euler obstruction function of $Z$, regarded as a constructible function on $\bP^n$.
\end{theorem}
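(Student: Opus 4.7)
My plan is to realize $\mathfrak{X}(Z)$ as a projective closure of a conic Lagrangian in the logarithmic cotangent bundle of $(\bP^n, D)$, and then invoke Theorem~\ref{thm_main}.

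First I would fix the standard compactification $(\bC^*)^n \hookrightarrow \bP^n$ with boundary $D$ equal to the union of the $n+1$ coordinate hyperplanes, so that $U = (\bC^*)^n$. The log cotangent bundle $T^*(\bP^n, D)$ is trivial of rank $n$, with global frame $\{d\log x_i\}_{i=1}^n$; this yields a canonical identification $T^*(\bP^n, D) \cong \bP^n \times \bC^n$. Under it, the defining condition for $(z, \mathbf{w}) \in \mathfrak{X}^\circ(Z)$ --- that $d\log m_{\mathbf{w}} = \sum_i w_i \frac{dx_i}{x_i}$ restrict to $0$ on $T_z Z_{\textrm{reg}}$ --- is precisely the statement that $(z,\mathbf{w})$ lies in the conormal variety $T^*_{Z_{\textrm{reg}}} U$. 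Completing each fiber $\bC^n \hookrightarrow \bP^n$ identifies $\mathfrak{X}(Z) \subset \bP^n \times \bP^n$ with the fiberwise projective completion of $\overline{T^*_Z U}^{\log} \subset T^*(\bP^n, D)$.

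Next, by MacPherson's correspondence between constructible functions and the Grothendieck group of constructible sheaves, choose a complex $\sF^\bullet$ on $U$ with $CC(\sF^\bullet) = (-1)^d[T^*_Z U]$ (associated function $(-1)^d \mathrm{Eu}_Z$). Theorem~\ref{thm_main} yields
\[
(-1)^d\, c^{T^*(\bP^n, D)}_{*}\!\bigl(\overline{T^*_Z U}^{\log}\bigr) \;=\; c^{T^*\bP^n}_{*}\!\bigl(CC(Rj_*\sF^\bullet)\bigr) \in A_*(\bP^n).
\]
The right-hand side equals $(-1)^d\,c_{Ma}(Z)$ via the microlocal index-theoretic interpretation of MacPherson's Chern class transformation: $c^{T^*\bP^n}_{*} \circ CC$ computes $c_*$ of the stalkwise Euler characteristic function up to a uniform sign, and the constructible function attached to $Rj_*\sF^\bullet$ on $\bP^n$ must, after accounting for the $Rj_*$-action along $D$, deliver $(-1)^d \mathrm{Eu}_Z$ in Chow-homology. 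For the left-hand side, since $T^*(\bP^n, D) \cong \bP^n \times \bC^n$ is a trivial bundle, a Segre-class computation expresses the microlocal Chern class of any conic $n$-cycle $\Lambda$ in terms of the bidegrees of its fiberwise projective completion $\bar\Lambda \subset \bP^n \times \bP^n$: writing $[\bar\Lambda] = \sum_i v_i[\bP^i \times \bP^{n-i}]$, one obtains $c^{T^*(\bP^n, D)}_{*}(\Lambda) = \sum_i (-1)^i v_i [\bP^i]$. Applying this with $\Lambda = \overline{T^*_Z U}^{\log}$ and $\bar\Lambda = \mathfrak{X}(Z)$ --- where $v_i = 0$ for $i > d$ since the first projection $\pi_1(\mathfrak{X}(Z)) = \overline{Z}$ has dimension $d$ --- and comparing with the right-hand side gives $c_{Ma}(Z) = \sum_{i=0}^{d} (-1)^{d-i} v_i [\bP^i]$.

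The main obstacle is making precise the right-hand side identification: boundary strata of $D$ generically contribute extra components to $CC(Rj_*\sF^\bullet)$, and one must verify that after applying $c^{T^*\bP^n}_{*}$ these either vanish or conspire to assemble exactly $(-1)^d\,c_{Ma}(Z)$ with no spurious lower-dimensional residues. The secondary technical ingredient is the trivial-bundle Chern-class/bidegree dictionary, which is the Chern--Mather analogue, via the Segre class formalism, of the smooth sch\"on calculation underlying Huh's original Theorem~1.2.
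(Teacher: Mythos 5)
Your geometric setup is the same as the paper's proof (trivialize $T^*(\bP^n,D)$ by the forms $d\log x_i$, identify $\mathfrak{X}^\circ(Z)$ with $T^*_{Z_{\reg}}U$ and $\mathfrak{X}(Z)$ with the fiberwise projective completion of $\overline{T^*_Z U}^{\log}$, then apply Theorem \ref{thm_main}), but the step you yourself flag as ``the main obstacle'' is precisely the point that needs an argument, and it is not settled by any vanishing or conspiracy of boundary components. The resolution is elementary once stated: pick $\sF^\centerdot$ on $U$ with $\chi_{st}(\sF^\centerdot)=Eu_Z|_U$, so that $CC(\sF^\centerdot)=(-1)^d\,T^*_ZU$ by \eqref{cc}. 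Because $\chi_{st}$ intertwines $Rj_*$ with the pushforward of constructible functions, and $j_*=j_!$ on constructible functions by \eqref{equ}, the function $\chi_{st}(Rj_*\sF^\centerdot)$ is exactly $Eu_Z$ extended by zero to $\bP^n$; hence $CC(Rj_*\sF^\centerdot)=CC(Eu_Z)$ as cycles in $T^*\bP^n$. This cycle genuinely contains conormals to boundary strata, and nothing about them needs to vanish: Proposition \ref{pmc} applies to the whole cycle and converts $c^{T^*\bP^n}_*(CC(Eu_Z))$ into the \emph{signed} Chern--Mather class $\check{c}_{Ma}(Z)=\sum_i(-1)^i c_i(Eu_Z)[\bP^i]$, since $c_*(Eu_Z)=c_{Ma}(Z)$ by definition. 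Without this identification your argument is incomplete at its central step.

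Separately, your sign bookkeeping is inconsistent, and taken literally your chain proves a statement off by $(-1)^d$ from what you assert at the end. (a) If $CC(\sF^\centerdot)=(-1)^d[T^*_ZU]$, then by \eqref{cc} the associated constructible function is $Eu_Z$, not $(-1)^d Eu_Z$. (b) The microlocal comparison is not ``up to a uniform sign'': by Proposition \ref{pmc} the sign is $(-1)^j$ on the degree-$j$ component, so the right-hand side is $\check{c}_{Ma}(Z)$ (possibly times $(-1)^d$ depending on your normalization of $\sF^\centerdot$), not $(-1)^d c_{Ma}(Z)$. (c) The bidegree dictionary carries no signs: by \eqref{sh}, for the trivial bundle $T^*(\bP^n,D)\cong\bP^n\times\bC^n$ with completion $\bP^n\times\bP^n$, one has $c^{T^*(\bP^n,D)}_*(\Lambda)=\sum_i v_i[\bP^i]$ exactly when $[\overline{\Lambda}]=\sum_i v_i[\bP^i\times\bP^{n-i}]$; your inserted factor $(-1)^i$ is not the definition in use. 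Chasing your stated claims yields $c_{Ma}(Z)=\sum_i(-1)^i v_i[\bP^i]$, which differs from the asserted $\sum_i(-1)^{d-i}v_i[\bP^i]$ by $(-1)^d$. With corrections (a)--(c) the computation closes and reproduces the paper's proof verbatim, so the approach is right but the write-up has a genuine gap at the $Rj_*$ step and internally inconsistent signs.
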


\begin{remark}
In the original statement of \cite[Theorem 2]{Huh}, the total space of critical points $\mathfrak{X}^\circ(Z)$ is defined as a subvariety of $Z\times \bP^{n-1}$, and hence $\mathfrak{X}(Z)$ is a subvariety of $\bP^n\times \bP^{n-1}$. When $Z$ is not equal to the ambient space $(\bC^*)^n$, our definition of $\mathfrak{X}(Z)$ is a cone of the one in \cite{Huh}. Hence, in this case, the two constructions define the same sequence of numbers $v_i$. We choose the new construction because it gives the correct formula even when $Z$ is equal to the ambient space $(\bC^*)^n$, as well as due to our use of CSM classes of conic cycles  (see \eqref{mce}). In fact, to understand the Chern classes of conic cycles $\Lambda$ in a vector bundle $E$, one will lose track of all conic cycles supported on the zero section of the vector bundle if taking the projective cones $\bP(\Lambda)\subset \bP(E)$ instead of taking the closure $\overline{\Lambda}\subset \overline{E}=\bP(E\oplus \C)$. For the same reason, in the later definition of the likelihood correspondence variety $\sL_Y$, we take the closure and define it as a subvariety of $\bP^n\times \bP^{n+1}$ instead of $\bP^n\times \bP^{n}$ as in \cite{HS}.
\end{remark}

Before stating the remaining results, we recall the definitions of the ML degrees, ML bidegrees and sectional ML degrees. 

Let $Y$ be an irreducible subvariety of $(\C^*)^n$, and let $Y^\circ=Y\setminus \cH$, where $\cH=\{x_1+\cdots +x_n=1\}$ and $(x_i)_{i=1,\ldots,n}$ are the coordinate functions of $(\C^*)^n$.\footnote{A more natural way to think of $Y^\circ$ is to realize it as a closed subvariety of $(\C^*)^{n+1}$ with the additional coordinate $x_0=1-x_1-\cdots -x_n$. Comparing to \cite[Page 69]{HS}, our coordinate function $x_i$ is equal to $\frac{p_i}{p_+}$ for $i=0,\dots,n$.} For a data point $\mathbf{u}=(u_0,u_1, \ldots, u_n)\in \C^{n+1}$, the associated (multivalued) \emph{likelihood function} is defined as 
\[
\ell_{\mathbf{u}}\coloneqq x_1^{u_1}\cdots x_n^{u_n} (1-x_1-\cdots-x_n)^{u_0}.
\]
 Assume that $Y^\circ$ is nonempty. Then the \emph{ML degree} of $Y$, denoted by $\MLdeg(Y)$, is defined as the number of critical points of $\ell_{\mathbf{u}}|_{Y^\circ_{\textrm{reg}}}$ for a generic data point $\mathbf{u}$. 

The \emph{likelihood correspondence variety} $\sL_Y$ is defined analogously to $\mathfrak{X}(Y)$. Firstly, we define
\[
\sL_Y^\circ\coloneqq \{(y, \mathbf{u})\in Y^\circ_{\textrm{reg}}\times \C^{n+1}\mid y \text{ is a critical point of }\ell_{\mathbf{u}}|_{Y^\circ_{\textrm{reg}}}\}. 
\]
Then we let $\sL_Y$ be the closure of $\sL^\circ_Y$ in $\bP^n\times \bP^{n+1}$. The \emph{ML bidegrees} of $Y$ are defined as the bidegrees of $\sL_Y\subset \bP^n\times \bP^{n+1}$. More precisely, the $i$-th ML bidegree $b_i$ of $Y$ is determined by
\[
[\sL_Y]=\sum_{i=0}^{\dim Y} b_i[\bP^i\times \bP^{n+1-i}]\in A_*(\bP^n\times \bP^{n+1}).
\]
In particular, the 0-th bidegree is equal to the ML degree, that is, $b_0=\MLdeg(Y)$. 

The \emph{sectional ML degrees} are defined to be the ML degrees of iterated hyperplane sections of $Y$. More precisely, the $i$-th sectional ML degree of $Y$ is defined by
\[
s_i\coloneqq \mathrm{MLdeg}(Y\cap L_{n-i}).
\]
where $L_{n-i}$ is a general affine subspace of $\bC^n$ of codimension $i$.
In particular, $s_0$ is the ML degree of $X$ and $s_{\dim(X)}$ is the degree of $X$. 

In the next theorem, we confirm the involution formulas conjectured by Huh and Sturmfels \cite[Conjecture 3.15]{HS} relating the ML bidegrees and the sectional ML degrees. 
\begin{theorem}\label{thm:ML-involution}
Let $Y \subset (\bC^*)^n$ be an irreducible very affine variety of dimension $d$, which is not contained in the hyperplane $\sH\coloneqq \{x_1+\cdots +x_n=1\}$. Then
\[
B_Y(\pp, \uu)=\frac{\uu\cdot S_Y(\pp, \uu-\pp)-\pp\cdot S_Y(\pp, 0)}{\uu-\pp}
, \quad
S_Y(\pp, \uu)=\frac{\uu\cdot B_Y(\pp, \uu+\pp)+\pp\cdot B_Y(\pp, 0)}{\uu+\pp},
\]
where 
\[S_Y(\pp, \uu)=(s_0 \cdot \pp^{d}+s_1\cdot \pp^{d-1}\uu+\cdots +s_d\cdot  \uu^d)\cdot \pp^{n-d}\]
and 
\[
B_Y(\pp, \uu)=(b_0 \cdot \pp^{d}+b_1\cdot \pp^{d-1}\uu+\cdots +b_d\cdot  \uu^d)\cdot \pp^{n-d}.
\]
\end{theorem}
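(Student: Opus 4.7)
The plan is to derive the involution formula by identifying both $S_Y$ and $B_Y$ with characteristic-class data attached to the constructible function $Eu_{Y^\circ}$ on $\bP^n$ (where $Y^\circ := Y \setminus \sH$), and then to apply an Aluffi-type projective-duality relation between the coefficients of a class and the degrees of its generic linear sections.

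First I would invoke Theorem \ref{thm_MLb} (our Chern--Mather strengthening of \cite[Theorem 2]{Huh}, made possible by Theorem \ref{thm_main}) to identify $[\sL_Y] \in A_*(\bP^n \times \bP^{n+1})$ with the projective-closure class of the logarithmic conic Lagrangian cycle attached to $Y^\circ$. This expresses the ML bidegrees $b_i$ as an explicit linear combination of the coefficients of the Chern--Mather class $c_{Ma}(Y^\circ) \in A_*(\bP^n)$; equivalently, the polynomial $B_Y(\pp,\uu)$ is the projective-bundle shadow on $\bP^n \times \bP^{n+1}$ of $c_{Ma}(Y^\circ)$, with the extra coordinate $u_0$ of the likelihood function producing the second projective factor.

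Second I would express the sectional ML degrees in the same language using the topological formula for the ML degree (Corollary \ref{cor_RW}). For a generic codimension-$i$ affine subspace $L_{n-i} \subset \bC^n$,
\[
s_i \;=\; \MLdeg\bigl(Y \cap L_{n-i}\bigr) \;=\; (-1)^{d-i}\!\int\! Eu_{(Y\cap L_{n-i})\setminus \sH},
\]
so each $s_i$ is the degree of the generic codimension-$i$ linear section of the same Chern--Mather class $c_{Ma}(Y^\circ)$ that produces the $b_i$. Hence the polynomial $S_Y(\pp,\uu)$ repackages the sectional integrals of that Chern--Mather class.

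The final step is a purely combinatorial identity: the coefficients of a class in $A_*(\bP^n)$ and the degrees of its generic linear sections are related by Aluffi's projective involution \cite[Theorem 1.1]{Alu2}. Rewriting that involution in the bivariate normalization of $S_Y$ and $B_Y$ should yield precisely the two stated formulas. I expect the main obstacle to be exactly this bookkeeping step: one must match the passage from $A_*(\bP^n)$ to $A_*(\bP^n \times \bP^{n+1})$ so that the extra coordinate $u_0$ of the likelihood function is correctly accounted for, track the signs $(-1)^{d-i}$ arising from the local Euler obstruction, and verify that the correction terms $\pp \cdot S_Y(\pp, 0)$ and $\pp \cdot B_Y(\pp, 0)$ match the contribution of the zero-section portion of the conic Lagrangian (always present when one takes the honest closure inside $T^*(X,D)$, but absent for a projective cone in $\bP(E)$, as emphasized in the remark following Theorem \ref{thm_CSM}).
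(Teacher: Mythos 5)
Your proposal follows essentially the paper's own route: express $B_Y$ via Theorem \ref{thm_MLb} as the coefficients of the (signed) Chern--Mather class of $Y^\circ$, express $S_Y$ via Corollary \ref{cor_RW} as signed Euler characteristics of $Eu_{Y^\circ}$ restricted to generic linear sections, and then apply Aluffi's involution (the paper uses its constructible-function form, Corollary \ref{cor_Aluffi}, which follows from \cite[Theorem 1.1]{Alu2} by linearity) together with the same change-of-variables bookkeeping you describe. One small correction to your last paragraph: the terms $\pp\cdot S_Y(\pp,0)$ and $\pp\cdot B_Y(\pp,0)$ are just the $p(0)$ term already built into Aluffi's involution, not a separate contribution from the zero-section part of the Lagrangian, but this does not affect the argument.
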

When $Y^{\circ}=Y\setminus \sH$ is smooth and Sch\"on, the above theorem is proved by Huh (\cite{Huh}, see also \cite[page 101]{HS}). 

By using Aluffi's involution formula \cite{Alu2}, we reduce the above Theorem \ref{thm:ML-involution} to the following result, relating Chern-Mather classes and ML bidegrees. 

\begin{theorem}\label{thm_MLb}
Let $Y$ and $\cH$ be defined as in Theorem \ref{thm:ML-involution}, and let $Y^\circ=Y\setminus \cH$.
Suppose the total Chern-Mather class of $Y^\circ$ is given by 
\[
c_{Ma}(Y^\circ)=\sum_{i=0}^{\dim Y} (-1)^{\dim Y-i}\alpha_i [\bP^{i}] \in A_*(\bP^n).
\]
\noindent Then, the sequence $\alpha_0, \ldots, \alpha_{\dim Y}$ consists of the ML bidegrees of $Y$. In other words, 
\[
B_Y(\pp, \uu)=\sum_{i=0}^{\dim Y} \alpha_i\,\pp^{n-i}\uu^i.
\]
\end{theorem}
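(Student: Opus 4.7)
The plan is to deduce Theorem \ref{thm_MLb} from Theorem \ref{thm_CSM}, applied to $Y^\circ$ after re-embedding it in a torus of one higher dimension. The hypothesis $Y\not\subset \cH$ guarantees that the affine-linear map
\begin{equation*}
\iota\colon (\C^*)^n\setminus \cH \;\longrightarrow\; (\C^*)^{n+1},\qquad (x_1,\dots,x_n)\mapsto (1-x_1-\cdots-x_n,\,x_1,\dots,x_n),
\end{equation*}
is a closed embedding onto the codimension-one subtorus $\{z_0+z_1+\cdots+z_n=1\}\subset (\C^*)^{n+1}$, so that $\iota(Y^\circ)$ is a $d$-dimensional very affine variety. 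Applying Theorem \ref{thm_CSM} to this embedding (with $n$ replaced by $n+1$) produces bidegrees $v_0,\dots,v_d$ satisfying
\begin{equation*}
[\mathfrak{X}(\iota(Y^\circ))]=\sum_{i=0}^{d} v_i\,[\bP^i\times \bP^{n+1-i}]\in A_*(\bP^{n+1}\times \bP^{n+1}),
\end{equation*}
together with $c_{Ma}(\iota(Y^\circ))=\sum_{i=0}^{d}(-1)^{d-i}v_i[\bP^i]\in A_*(\bP^{n+1})$.

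The heart of the argument is a cycle-level matching of $\mathfrak{X}(\iota(Y^\circ))$ with $\sL_Y$. Writing a master function on $(\C^*)^{n+1}$ as $m_{\mathbf w}=z_0^{w_0}z_1^{w_1}\cdots z_n^{w_n}$, a direct computation shows $\iota^*d\log m_{\mathbf w}=d\log \ell_{\mathbf w}$, so $\iota$ induces a bijection between the critical points of $\ell_{\mathbf w}|_{Y^\circ_{\mathrm{reg}}}$ and of $m_{\mathbf w}|_{\iota(Y^\circ)_{\mathrm{reg}}}$ for every weight $\mathbf w$. Hence $(\iota\times \mathrm{id})(\sL_Y^\circ)=\mathfrak{X}^\circ(\iota(Y^\circ))$ inside $(\C^*)^{n+1}\times \C^{n+1}$. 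Since $\iota\times \mathrm{id}$ extends to a closed embedding $\bP^n\times \bP^{n+1}\hookrightarrow \bP^{n+1}\times\bP^{n+1}$, and both $\sL_Y$ and $\mathfrak{X}(\iota(Y^\circ))$ are the irreducible closures of this common open locus, we obtain the equality of cycles $(\iota\times \mathrm{id})_*[\sL_Y]=[\mathfrak{X}(\iota(Y^\circ))]$.

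To finish, I compare coefficients: because $\iota\colon \bP^n\hookrightarrow \bP^{n+1}$ is a linear closed embedding onto a hyperplane (degree one on its image), $\iota_*[\bP^i]_{\bP^n}=[\bP^i]_{\bP^{n+1}}$ and $(\iota\times \mathrm{id})_*[\bP^i\times \bP^{n+1-i}]=[\bP^i\times \bP^{n+1-i}]$. Combined with the functoriality of MacPherson's Chern class transformation ($\iota_*c_{Ma}(Y^\circ)=c_{Ma}(\iota(Y^\circ))$, since $\iota$ is proper and $Eu_{\iota(Y^\circ)}\circ \iota=Eu_{Y^\circ}$), the preceding two paragraphs yield $\alpha_i=v_i=b_i$, which is the asserted formula for $B_Y(\mathsf p,\mathsf u)$. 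I expect the main obstacle to be the cycle-theoretic agreement in the second paragraph: one must rule out spurious boundary components appearing in $\mathfrak{X}(\iota(Y^\circ))$ but missing from $(\iota\times \mathrm{id})_*[\sL_Y]$. This is controlled by the fact that $\iota\times \mathrm{id}$ is a closed embedding of irreducible varieties, so that the scheme-theoretic image of $\sL_Y$ is an irreducible closed subvariety of the correct dimension, and hence coincides with the irreducible closure defining $\mathfrak{X}(\iota(Y^\circ))$.
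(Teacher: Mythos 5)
Your argument is correct, and it takes a genuinely different route from the paper's. You reduce Theorem \ref{thm_MLb} to Theorem \ref{thm_CSM} by re-embedding $Y^\circ$ into $(\C^*)^{n+1}$ via $x_0=1-x_1-\cdots-x_n$ (a re-embedding the paper mentions only in a footnote): the identity $\iota^*d\log m_{\mathbf w}=d\log \ell_{\mathbf w}$ matches $\sL_Y^\circ$ with $\mathfrak{X}^\circ(\iota(Y^\circ))$, the linear extension $\bP^n\hookrightarrow\bP^{n+1}$ (a closed embedding onto a hyperplane) carries $\sL_Y$ onto $\mathfrak{X}(\iota(Y^\circ))$, and proper functoriality of $c_*$ together with the invariance of the local Euler obstruction under isomorphism transfers $c_{Ma}(Y^\circ)$ to $c_{Ma}(\iota(Y^\circ))$; comparing coefficients in the bases $[\bP^i\times\bP^{n+1-i}]$ and $[\bP^i]$ then gives $b_i=v_i=\alpha_i$. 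The paper instead argues directly on $X=\bP^n$ with the simple normal crossing divisor $D$ consisting of the coordinate hyperplanes together with $\cH$: it identifies the data space $\C^{n+1}$ with $H^0(\bP^n,\Omega^1_{\bP^n}(\log D))$, uses the surjective evaluation map $\rho:\bP^n\times\C^{n+1}\to T^*(\bP^n,D)$ to write $\sL_Y\cap(\bP^n\times\C^{n+1})=\rho^{-1}\big(\overline{T^*_{Y^\circ}U}^{\log}\big)$ with $U=(\C^*)^n\setminus\cH$, and then invokes Theorem \ref{thm_main}, Proposition \ref{pmc}, Lemma \ref{lemma_functoriality} and \eqref{cc}. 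Your reduction is more economical, using Theorem \ref{thm_CSM} as a black box plus elementary pushforward bookkeeping, and it avoids the global generation of $\Omega^1_{\bP^n}(\log D)$ and the bundle-map comparison; the paper's direct proof keeps the likelihood correspondence tied to the evaluation map into the logarithmic cotangent bundle, which is the geometric picture it exploits throughout. Two cosmetic points: the image of $\iota$ is the hypersurface $\{z_0+\cdots+z_n=1\}$ in $(\C^*)^{n+1}$, which is not a subtorus (only its being a closed subvariety is needed, so $\iota(Y^\circ)$ is indeed very affine); and your worry about spurious boundary components is moot, since for a closed embedding the image of a closure equals the closure of the image, so the cycle identity $(\iota\times\mathrm{id})_*[\sL_Y]=[\mathfrak{X}(\iota(Y^\circ))]$ needs no irreducibility argument beyond the fact that both sides are reduced closures of the same locus.
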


Since the degree zero part of the Chern-Schwartz-MacPherson class (viewed in $A_*(\bP^n)$) computes the Euler characteristic, and since $\alpha_0$ is equal to the ML degree, an immediate consequence of Theorem \ref{thm_MLb} is the following corollary, which is proved implicitly in \cite{RW1}. 

\begin{corollary}\label{cor_RW}
Let $Y$ be an irreducible subvariety of $(\C^*)^n$. Assume that $Y^\circ=Y\setminus \sH$ is nonempty. Then
\[
\MLdeg(Y)=(-1)^{\dim Y}\chi(Eu_{Y^\circ}). 
\]
\end{corollary}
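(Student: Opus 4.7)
The plan is to derive this as a direct consequence of Theorem \ref{thm_MLb} by extracting the degree-zero component in $A_*(\bP^n)$ on both sides of that identification.

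First I would identify the $0$-th ML bidegree with the ML degree. In the expansion
\[
[\sL_Y]=\sum_{i=0}^{\dim Y} b_i\,[\bP^i\times \bP^{n+1-i}]\in A_*(\bP^n\times \bP^{n+1}),
\]
the coefficient $b_0$ is computed by intersecting $[\sL_Y]$ with the Poincar\'e-dual basis element $[\bP^n\times \bP^0]$, i.e., with a fiber of the projection to $\bP^{n+1}$ over a generic point. This intersection counts the points of $\sL_Y$ lying above a generic $\mathbf{u}\in\bP^{n+1}$. Since the critical locus of the multivalued function $\ell_{\mathbf{u}}|_{Y^\circ_{\textrm{reg}}}$ is invariant under the rescaling $\mathbf{u}\mapsto\lambda\mathbf{u}$, the count agrees with $\MLdeg(Y)$; hence $b_0=\MLdeg(Y)$.

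Next, Theorem \ref{thm_MLb} asserts $b_i=\alpha_i$ for all $i$, where
\[
c_{Ma}(Y^\circ)=\sum_{i=0}^{\dim Y}(-1)^{\dim Y-i}\alpha_i\,[\bP^i]\in A_*(\bP^n).
\]
In particular, the coefficient of the point class $[\bP^0]$ in $c_{Ma}(Y^\circ)$ equals $(-1)^{\dim Y}\alpha_0=(-1)^{\dim Y}\MLdeg(Y)$.

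Finally, I would pair this with the fundamental property of MacPherson's transformation: since $c_*$ is a natural transformation with $c_*(1_{\mathrm{pt}})=1$, pushing $c_*(Eu_{Y^\circ})$ forward along $\bP^n\to\mathrm{pt}$ yields the weighted Euler characteristic $\chi(Eu_{Y^\circ})$. On the Chow side, this same pushforward kills $[\bP^i]$ for $i>0$ and sends $[\bP^0]$ to $1$, so it recovers exactly the coefficient computed above. Equating the two gives $\chi(Eu_{Y^\circ})=(-1)^{\dim Y}\MLdeg(Y)$, which rearranges to the claim. There is no serious obstacle here: once Theorem \ref{thm_MLb} is available, the corollary is purely bookkeeping, with the only small point of care being the identification $b_0=\MLdeg(Y)$ from the definition of the likelihood correspondence.
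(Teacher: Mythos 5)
Your proposal is correct and follows essentially the same route as the paper: Corollary \ref{cor_RW} is obtained from Theorem \ref{thm_MLb} together with the two bookkeeping facts that $b_0=\MLdeg(Y)$ (which the paper records as part of the definition of the ML bidegrees) and that the coefficient of $[\bP^0]$ in $c_*(Eu_{Y^\circ})\in A_*(\bP^n)$ equals $\chi(Eu_{Y^\circ})$ by pushing forward to a point. The only cosmetic remark is that, with the paper's convention $\C^{n+1}\subset\bP^{n+1}$ as an affine chart, a generic fiber of $\sL_Y\to\bP^{n+1}$ already lies over an honest data vector $\mathbf{u}\in\C^{n+1}$, so the rescaling-invariance step in your identification of $b_0$ is unnecessary (though harmless).
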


The paper is organized as follows. 
Section \ref{sec:2} is devoted to presenting background material on constructible functions, constructible sheaves, and their characteristic cycles, on the microlocal interpretation of the MacPherson's Chern classes, as well as on Aluffi's involution formula relating CSM classes to sectional Euler characteristics. Theorem \ref{thm_main} is proved in Section \ref{sec:3}. Applications of Theorem \ref{thm_main} are discussed in Section \ref{sec:4}, where Theorems \ref{thm_CSM}, \ref{thm:ML-involution} and \ref{thm_MLb} are proved and examples using numerical algebraic geometry are worked out. 

 \medskip

Throughout the paper, we work in the category of algebraic varieties defined over $\bC$. In particular, a vector bundle corresponds to a locally free coherent sheaf. 

\medskip

\textbf{Acknowledgements.} The authors thank J\"org Sch\"urmann for his comments on an earlier version of the manuscript. Rodriguez thanks Daniel Corey for his comments on the Sch\"on property. Maxim is partially supported by the Simons Foundation (Collaboration Grant \#567077), and by the Romanian Ministry of National Education (CNCS-UEFISCDI grant PN-III-P4-ID-PCE-2020-0029). Wang is partially supported by a Sloan fellowship. Wu is supported by an FWO postdoctoral fellowship. 

%%%%%%%%%%%%%%%%%%%%%%%

\section{Preliminaries}\label{sec:2}
In this section, we collect relevant background material on constructible functions, constructible sheaves and their characteristic cycles, we give a brief overview of the microlocal interpretation of MacPherson's Chern classes, and recall Aluffi's involution formula relating CSM classes to sectional Euler characteristics.

\subsection{Constructible sheaves. Constructible functions. Characteristic cycles}

Denote by $D^b_c(X)$ the bounded derived category of $\bC$-constructible complexes (with respect to some stratification) on the smooth complex algebraic variety $X$. By associating characteristic cycles to constructible complexes on $X$ (e.g., see \cite[Definition 4.3.19]{Di} or \cite[Chapter IX]{KS}), one gets a functor 
$$CC:K_0(D^b_c(X)) \lra L(X)$$
on the Grothendieck group of $\bC$-constructible complexes, where $L(X)$ is the free abelian group spanned by the irreducible conic Lagrangian cycles in the cotangent bundle $T^*X$. Recall that any element of $L(X)$ is of the form $\sum_k n_k \cdot T^*_{Z_k}X$, for some $n_k \in \bZ$ and $Z_k$ closed irreducible subvarieties of $X$. Here, if $Z$ is a closed irreducible subvariety of $X$ with smooth locus $Z_{\reg}$, its conormal bundle $T^*_{Z}X$ is defined as the closure in $T^*X$ of $$T^*_{Z_{\reg}}X:=\{
(z,\xi)\in T^*X \mid z \in Z_{\reg}, \ \xi \in T^*_zX, \ \xi\vert_{T_zZ_{\rm reg}}=0 \}.$$
One can then define a group isomorphism
$$T:L(X) \lra Z(X)$$ to the group $Z(X)$ of algebraic cycles on $X$ by:
$$\sum_k n_k \cdot T^*_{Z_k}X \longmapsto \sum_k (-1)^{\dim Z_k} n_k Z_k.$$

Let $F(X)$ be the group of algebraically constructible functions on $X$, i.e., the free abelian group generated by indicator functions $1_Z$ of closed irreducible subvarieties $Z$ of $X$. There is a unique linear map $$\chi:F(X) \longrightarrow \bZ$$
called the {\it Euler characteristic}, defined on generators by $\chi(1_Z):=\chi(Z).$

An important example of a constructible function on $X$ is the MacPherson {\it local Euler obstruction} function $Eu_Z$ of an irreducible subvariety $Z$ of $X$, see  \cite{MP0}. The local Euler obstruction function is a measure of the singularities of $Z$, and it takes the value $1$ on the smooth locus $Z_{\rm reg}$.

 The relation between constructible complexes and constructible functions is made explicit by the following construction. 
  To any constructible complex $\sF^{\centerdot} \in D^b_c(X)$,  one associates a constructible function $\chi_{st}(\sF^{\centerdot})\in F(X)$ by taking stalkwise Euler characteristics, i.e.,
$$\chi_{st}(\sF^{\centerdot})(x):=\chi(\sF^{\centerdot}_x)$$
for any $x \in X$. For example, $\chi_{st}(i_!\bC_Z)=1_Z$, for $Z$ a closed irreducible subvariety of $X$. Note that if $\varphi=\chi_{st}(\sF^{\centerdot})$, then $\chi(\varphi)=\chi(X,\sF^{\centerdot})$. 

Since the Euler characteristic is additive with respect to distinguished triangles, one gets an induced group homomorphism (in fact, an epimorphism)
$$\chi_{st}:K_0(D^b_c(X)) \lra F(X).$$
Moreover, since the class map $D^b_c(X) \to K_0(D^b_c(X))$ is onto, $\chi_{st}$ is already an epimorphism on $D^b_c(X)$.

If $Z$ is a closed subvariety of $X$, we may regard the function $Eu_Z$ as being defined on  all of $X$ by setting $Eu_Z(x)=0$ for $x \in X \setminus Z$. In particular, one may consider the group homomorphism
\begin{equation}\label{eq_ZCF}
Eu:Z(X) \lra F(X)
\end{equation}
defined on an irreducible cycle $Z$ by the assignment $Z \mapsto Eu_Z$, and then extended by $\bZ$-linearity. A well-known result (e.g., see \cite[Theorem 4.1.38]{Di} and the references therein) states that the homomorphism
$Eu:Z(X) \to F(X)$
is an isomorphism.

The Euler obstruction function appears in the formulation of the {\it local index theorem}, which in the  notations of this section asserts the existence of the following commutative diagram  (e.g., see \cite[Section 5.0.3]{S} and the references therein):
\begin{equation}\label{eq_xy}
\xymatrix{
K_0(D^b_c(X)) \ar[d]_{CC} \ar[r]^{\chi_{st}} & F(X) \ar[d]^{Eu^{-1}}_{\cong}  \\
L(X) \ar[r]^T_{\cong} & Z(X)
}
\end{equation}
In particular, one can associate a characteristic cycle to any constructible function $\varphi \in F(X)$ by the formula
$$CC(\varphi):=T^{-1} \circ Eu^{-1}(\varphi).$$
For example, if $Z$ is a closed irreducible subvariety of $X$, one has: \be\label{cc} CC(Eu_Z)=(-1)^{\dim_{\C} Z}\cdot T^*_Z X.\ee
Note also that $$CC(\sF^{\centerdot})=CC(\chi_{st}(\sF^{\centerdot}))$$ for any constructible complex $\sF^{\centerdot} \in D^b_c(X)$.

It is well known (e.g., see \cite[Section 2.3]{S}) that all the usual functors in sheaf theory, which respect the corresponding category of constructible complexes of sheaves, induce by the epimorphism $\chi_{st}$ well-defined group homomorphisms on the level of constructible functions. Moreover, if $f:X \to Y$ is a morphism of complex algebraic varieties, then one has the equality (see, e.g., \cite[formula (112)]{MS} and the references therein)
$$Rf_!=Rf_*:K_0(D^b_c(X)) \lra K_0(D^b_c(Y)),$$
which also implies for the induced group homomorphisms of complex algebraically constructible functions the equality
\begin{equation}\label{equ}
   f_!=f_*:F(X) \lra F(Y). 
\end{equation}
Here we note that $f_*:F(X) \lra F(Y)$ can be described more explicitly as: 
$$1_Z \longmapsto \left( y \longmapsto \chi(f^{-1}(y)\cap Z), \ \ y \in Y\right).$$
Finally, by \eqref{eq_xy}, all these functors can also be considered as functors on conic Lagrangian cycles in the cotangent bundle $T^*X$ (with support in a certain subvariety, if needed).

\subsection{CSM classes. Microlocal interpretation}\label{sec:csm}
We work in the complex algebraic context, with $A_*$ the Chow group and $H_*$ the Borel-Moore homology.

In \cite{MP0}, MacPherson extended the definition of Chern classes to singular complex algebraic varieties. More precisely, he defined a natural transformation
$$c_*:F(-) \lra A_*(-)$$
from the functor of constructible functions (with proper morphisms) to Chow (or Borel-Moore) homology, such that if $X$ is a smooth variety then $c_*(1_X)=c(TX) \cap [X]$. Here, $c(TX)$ denotes the total (cohomology) Chern class of the tangent bundle $TX$, and $[X]$ is the fundamental class.
For any %constructible subset 
locally closed irreducible subvariety
$Z$ of a complex algebraic variety $X$, the class 
$$c_{SM}(Z):=c_*(1_Z) \in A_*(X)$$
is usually referred to as the {\it Chern-Schwartz-MacPherson (CSM) class} of $Z$ in $X$. Similarly, 
the class $$c_{Ma}(Z):=c_*(Eu_Z) \in A_*(X)$$ is called the {\it Chern-Mather class} of $Z$, where we regard $Eu_Z$ as a constructible function on $X$ by setting the value zero on $X \setminus Z$.

Results of Ginsburg \cite{Gin} and Sabbah \cite{Sab} showed that McPherson's Chern class transformation $c_*$ factors through the group of conic Lagrangian cycles in the cotangent bundle. This construction was  revisited more recently in \cite{Alu}, as well as in \cite{AMSS}, also in the equivariant context. 

\medskip

In this section, we recall the  construction of MacPherson's Chern class transformation in terms of characteristic cycles, following the approach of \cite{Alu, AMSS}.

Let $X$ be a smooth complex algebraic variety, and let $E$ be a rank $r$ vector bundle on $X$. Let $\overline{E}\coloneqq \bP(E\oplus \mathbf{1})$ be the projective bundle, which is a fiber-wise compactification of $E$ (here $\mathbf{1}$ denotes the trivial line bundle on $X$). Then $E$ may be identified with the open complement of $\bP(E)$ in $\overline{E}$. Let $\pi:E\to X$ and $\bar{\pi}:\overline{E} \to X$ be 
the projections, and let $\xi:=c_1(\mathcal{O}_{\overline{E}}(1))$. Pullback via ${\bar \pi}$ realizes $A_*(\overline{E})$ as a $A_*(X)$-module. Moreover, as shown in \cite[Theorem 3.3]{Ful}, each $\alpha \in A_i(\overline{E})$ can be uniquely written as:
\begin{equation}\label{sh}
    \alpha=\sum_{j=0}^r \xi^j \cap {\bar \pi}^* \alpha_j,
\end{equation}
for $\alpha_j \in A_{i-r+j}(X)$.

An irreducible conic $r$-dimensional subvariety ($r={\rm rank} \ E$) $\Lambda \subset E$
determines an $r$-dimensional cycle $\overline{\Lambda}$ in $\overline{E}$ and, by formula \eqref{sh}, knowledge of $[\overline{\Lambda}] \in A_r(\overline{E})$ is equivalent to knowledge of a collection of $r+1$ classes on $X$. We denote these classes by $$c_0^E(\Lambda), \ldots, c_r^E(\Lambda)$$ with $c_j^E(\Lambda) \in A_j(X)$, and call them the {\it Chern classes of $\Lambda$}. The terminology is justified by the following result (see \cite[Lemma 4.3]{Alu}, \cite[Proposition 3.3]{AMSS} and the references therein), applied to the cotangent bundle $T^*X$ and elements of $L(X)$:
\begin{proposition}\label{pmc}
For any constructible function $\varphi \in F(X)$, the Chern classes of the characteristic cycle $CC(\varphi)$ equal the signed MacPherson Chern classes of $\varphi$, namely:
\begin{equation}\label{mc}
    c_j^{T^*X}\left(CC(\varphi)\right)=(-1)^j \cdot  c_j(\varphi) \in A_j(X), \ \ j=0,\ldots, \dim(X),
\end{equation}
where $c_j(\varphi)$ denotes the $j$-th component of MacPherson's  Chern class $c_*(\varphi)$.
\end{proposition}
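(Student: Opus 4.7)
The plan is to prove the formula by reducing to the case of Euler obstruction functions and then invoking the classical Nash-blowup description of the Chern-Mather class. First, both sides of \eqref{mc} are $\bZ$-linear in $\varphi$: the characteristic cycle map $CC$ is linear by construction, the assignment $\Lambda \mapsto c_j^{T^*X}(\Lambda)$ extends linearly to formal sums of conic Lagrangian cycles via the decomposition \eqref{sh}, and $c_*$ is a group homomorphism. Using the isomorphism $Eu: Z(X) \xrightarrow{\cong} F(X)$ from \eqref{eq_ZCF}, the group $F(X)$ is generated over $\bZ$ by the local Euler obstructions $Eu_Z$, so it suffices to verify \eqref{mc} when $\varphi = Eu_Z$ for a closed irreducible subvariety $Z \subset X$.

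For such $\varphi = Eu_Z$, setting $d = \dim Z$, formula \eqref{cc} gives $CC(Eu_Z) = (-1)^d \cdot T^*_Z X$, and by definition $c_*(Eu_Z) = c_{Ma}(Z)$. The identity \eqref{mc} then reduces to the purely geometric statement
\[
c_j^{T^*X}(T^*_Z X) = (-1)^{d+j}\,(c_{Ma}(Z))_j \in A_j(X),
\]
relating the Chern classes (in the sense of \eqref{sh}) of the conormal variety $T^*_Z X \subset T^*X$ to the degree-$j$ components of the Chern-Mather class of $Z$.

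To establish this identity, I would pass to the Nash blowup $\nu: \widetilde{Z} \to Z$, with its tautological rank-$d$ subbundle $\widetilde{T} \subset \nu^* TX$ and quotient $Q = \nu^* TX / \widetilde{T}$. By definition $c_{Ma}(Z) = \nu_*(c(\widetilde{T}) \cap [\widetilde{Z}])$. Dualizing the tautological sequence yields $Q^\vee \hookrightarrow \nu^* T^*X$, and composing with the pullback map $\nu^* T^*X \to T^*X|_Z$ produces a natural morphism $\bP(Q^\vee \oplus \mathbf{1}) \to \overline{T^*_Z X}$ over $X$, which is an isomorphism over $Z_{\reg}$ (both sides have dimension $\dim X$). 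Using this birational model together with the projective bundle formula on $\bP(Q^\vee \oplus \mathbf{1})$ to compute the pushforwards $\bar\pi_*(\xi^k \cap [\overline{T^*_Z X}])$ for $k \geq 0$, and then inverting via \eqref{sh} to solve for the $c_j^{T^*X}(T^*_Z X)$, produces an expression in the Segre (equivalently, Chern) classes of $Q^\vee$ on $\widetilde{Z}$, pushed forward through $\nu$. Combining this with the Whitney identity $c(\widetilde{T}) \cdot c(Q) = c(\nu^* TX)$ and the duality $c_k(Q^\vee) = (-1)^k c_k(Q)$ delivers the claimed formula.

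The main obstacle is the sign and degree bookkeeping in the last step: the overall sign $(-1)^{d+j}$ must be tracked through the Segre-to-Chern inversion on the projective bundle $\bP(Q^\vee \oplus \mathbf{1})$ and through the duality between $T^*X$ and $TX$, and one must verify that the birational comparison $\bP(Q^\vee \oplus \mathbf{1}) \to \overline{T^*_Z X}$ does not introduce extraneous contributions from above $Z \setminus Z_{\reg}$. The latter is guaranteed because any exceptional components are supported in strictly smaller fiber dimension and therefore do not perturb the coefficients in the basis $\{\xi^j \cap \bar{\pi}^*(-)\}$ prescribed by \eqref{sh}; alternatively, one may avoid this issue altogether by invoking directly the microlocal formulation of Sabbah-Kennedy (as used in \cite{Alu, AMSS}) which packages this computation into the statement of the proposition.
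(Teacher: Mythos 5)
You should first note that the paper itself gives no proof of Proposition \ref{pmc}: it is quoted from \cite[Lemma 4.3]{Alu} and \cite[Proposition 3.3]{AMSS}, whose content is the classical Sabbah--Kennedy comparison between conormal cycles and Chern--Mather classes. Your proposal in effect reconstructs the proof of that cited result rather than citing it, and its structure is sound: linearity of both sides together with the isomorphism $Eu\colon Z(X)\to F(X)$ from \eqref{eq_ZCF} reduces \eqref{mc} to $\varphi=Eu_Z$; then \eqref{cc} and $c_*(Eu_Z)=c_{Ma}(Z)$ reduce it to the identity $c_j^{T^*X}(T^*_ZX)=(-1)^{d+j}\,(c_{Ma}(Z))_j$ with $d=\dim Z$; and the Nash-blowup model is the right tool, since over $Z_{\reg}$ one has $Q^\vee\cong N^*_{Z_{\reg}}X$, so $\bP(Q^\vee\oplus\mathbf 1)\to\overline{T^*_ZX}$ is proper and birational. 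Two remarks. First, your justification that the birational comparison introduces no extraneous terms is more complicated than needed: a proper birational morphism of irreducible varieties pushes the fundamental class to the fundamental class, so $[\overline{T^*_ZX}]$ is computed on $\bP(Q^\vee\oplus\mathbf 1)$ exactly, with nothing over $Z\setminus Z_{\reg}$ to rule out. Second, and more substantively, the step you defer as ``sign and degree bookkeeping'' is the actual content of the Sabbah--Kennedy formula: one must express $\bar\pi_*\bigl(\xi^k\cap[\overline{T^*_ZX}]\bigr)$ through $\nu$ in terms of Segre classes of $Q^\vee\oplus\mathbf 1$, invert the triangular system coming from \eqref{sh}, and combine Whitney with $c_k(Q^\vee)=(-1)^kc_k(Q)$ and with MacPherson's identity $c_{Ma}(Z)=\nu_*(c(\widetilde T)\cap[\widetilde Z])$ (itself a theorem, not the paper's definition of $c_{Ma}$). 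That computation does close --- for instance, for $Z=X$ smooth one finds $c_k^{T^*X}(T^*_XX)=(-1)^{n-k}c_{n-k}(TX)\cap[X]$, consistent with \eqref{mc} --- but as written you have only sketched it, and your fallback of invoking Sabbah--Kennedy via \cite{Alu, AMSS} is precisely the paper's route. In short: your approach buys a self-contained argument at the cost of an omitted but standard Segre-class computation, whereas the paper simply cites the literature.
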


\begin{remark}
The signs appearing in  \eqref{mc} may of course be already absorbed in the definition of the $c_j(\Lambda)$. This is in fact the way Ginzburg describes these classes in the Appendix of \cite{Gin}.
\end{remark}

For future reference, let us introduce the following notation for the {\it signed MacPherson Chern classes} appearing in Proposition \ref{pmc}. For $\varphi \in F(X)$, set:
\begin{equation}
    \check{c}_{*}(\varphi)=\sum_{j \geq 0} \check{c}_j(\varphi):=
    \sum_{j \geq 0} (-1)^j {c}_j(\varphi).
\end{equation}
In particular, if $\varphi=1_Z$ or $\varphi=Eu_Z$, we get a corresponding {\it signed CSM class} $\check{c}_{SM}(Z)$ and, respectively, a {\it signed Chern-Mather class} $\check{c}_{Ma}(Z)$ of a 
locally closed irreducible subvariety $Z \subset X$.

\medskip

There is an alternative way of recovering the Chern classes of lagrangian cycles, which makes use of $\bC^*$-equivariant Chow groups.

Consider the $\bC^*$-action on the vector bundle $E$ by fiberwise dilation, and the trivial action on $\mathbf{1}$. This induces a $\bC^*$-action on $\overline{E}$, such that the inclusion $E \subset \overline{E}$ is $\bC^*$-equivariant (and the trivial action on $\bP(E)$). The natural projection $\pi:E\to X$ is equivariant, where $\bC^*$ acts trivially on $X$. 
Let $A_*^{\bC^*}(E)$ and $A_*^{\bC^*}(X)$ be the $\bC^*$-equivariant Chow groups of $E$ and $X$, respectively. Since $\bC^*$ acts trivially on $X$, we have an isomorphism
$$A_*^{\bC^*}(X) \cong A_*(X)[t],$$ where $t:=c_1(\mathcal{O}_{\bP^\infty}(-1))$. Moreover, the inclusion $i:X \hookrightarrow E$ of the zero section induces isomorphisms (e.g., see \cite[Lemma 2.5]{AMSS}):
$$i^*=(\pi^*)^{-1}:A_*^{\bC^*}(E) \lra A_{*-r}^{\bC^*}(X),$$
with $r={\rm rank} \ E$.

An $r$-dimensional $\bC^*$-invariant cycle $\Lambda$ in $E$ determines as above an $r$-dimensional cycle $\overline{\Lambda}$ in $\overline{E}$. By comparing the class $[\Lambda]$ of $\Lambda$  in the equivariant Chow group  $A_*^{\bC^*}(E)\overset{i^*}{\cong}A_{*-r}^{\bC^*}(X)$ with the class $[\overline{\Lambda}]$ in the ordinary Chow group $A_*(\overline{E})$, one gets the following identification of \cite[Proposition 2.7]{AMSS}:
\begin{equation}\label{mce}
    i^*([\Lambda])\vert_{t\mapsto 1}=c_0^E(\Lambda) + c_1^E(\Lambda)\ldots + c_r^E(\Lambda).
\end{equation}
Formula \eqref{mce} yields immediately the following.
\begin{lemma}\label{lemma_functoriality}
Let $\phi:E_1 \to E_2$ be a bundle map of rank $r$ vector bundles over $X$. If $\Lambda$ is an $r$-dimensional $\bC^*$-invariant cycle in $E_2$ such that the $\bC^*$-invariant subset $\phi^{-1}(\Lambda)$ in $E_1$ is also pure $r$-dimensional, then
$$c^{E_1}_j(\phi^*\Lambda)=c^{E_2}_j(\Lambda),$$
for each $j=0,\ldots,r$.
\end{lemma}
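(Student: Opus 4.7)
The plan is to deduce the lemma directly from the equivariant Chow characterization \eqref{mce} of the Chern classes of a conic (i.e., $\C^*$-invariant) cycle, together with the functoriality of pullbacks along a bundle map.

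First I would set up the comparison diagram. Let $\pi_k: E_k \to X$ and $i_k: X \hookrightarrow E_k$ denote the projections and zero sections for $k=1,2$. Since $\phi$ is a bundle map over $X$, it is $\C^*$-equivariant for the fiberwise dilation actions, preserves the zero section (i.e., $\phi \circ i_1 = i_2$), and satisfies $\pi_2 \circ \phi = \pi_1$. Applying $\bC^*$-equivariant pullbacks to the identity $\phi \circ i_1 = i_2$ gives the basic relation
\[
i_2^*\,[\Lambda] \;=\; i_1^*\,\phi^{*}[\Lambda] \quad \text{in } A^{\C^*}_{*-r}(X) \cong A_*(X)[t].
\]
Once this is established, formula \eqref{mce} applied on both sides (after setting $t \mapsto 1$) yields
\[
c_0^{E_2}(\Lambda) + \cdots + c_r^{E_2}(\Lambda) \;=\; c_0^{E_1}(\phi^*\Lambda) + \cdots + c_r^{E_1}(\phi^*\Lambda),
\]
and matching the components in each $A_j(X)$ finishes the proof.

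The step I expect to require the most care is justifying that $\phi^{*}[\Lambda]$ is represented by the cycle $[\phi^{*}\Lambda]$ as defined via the scheme-theoretic preimage. The map $\phi$ is not flat in general (e.g., when $\phi$ has nontrivial kernel), so flat pullback of cycles is not directly available. I would handle this via the refined Gysin construction: factor $\phi$ as the composition
\[
E_1 \xrightarrow{\ (\mathrm{id},\phi)\ } E_1 \oplus E_2 \xrightarrow{\ \mathrm{pr}_2\ } E_2,
\]
where the first map is the regular embedding given by the graph of $\phi$ (a section of a vector bundle) and the second is flat. The flat pullback along $\mathrm{pr}_2$ sends $[\Lambda]$ to $[E_1 \times_X \Lambda]$, and the Gysin map for the regular embedding $(\mathrm{id},\phi)$ then intersects with the graph. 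The assumption that $\phi^{-1}(\Lambda)$ is pure of the expected dimension $r = \dim \Lambda + \dim E_1 - \dim E_2$ ensures this intersection is proper, so the refined Gysin pullback equals $[\phi^{-1}(\Lambda)] = [\phi^*\Lambda]$ in the equivariant Chow group (cf.\ \cite[\S 6]{Ful}). $\C^*$-invariance of all objects in sight promotes this identity to $A_*^{\C^*}$.

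Finally, I would combine the two pieces: the identity $i_1^*\phi^*[\Lambda] = i_2^*[\Lambda]$, together with $\phi^*[\Lambda] = [\phi^*\Lambda]$, gives
\[
i_1^*\,[\phi^*\Lambda] \;=\; i_2^*\,[\Lambda].
\]
Specializing $t \mapsto 1$ in both sides via \eqref{mce} and extracting the degree-$j$ component in $A_j(X)$ yields the claimed equality $c_j^{E_1}(\phi^*\Lambda)=c_j^{E_2}(\Lambda)$ for every $j=0,\ldots,r$.
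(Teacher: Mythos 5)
Your proof is correct and takes essentially the same route as the paper, whose entire argument is the remark that the lemma follows immediately from formula \eqref{mce}; you simply make the implicit steps explicit, namely the compatibility $i_1^*\phi^*[\Lambda]=i_2^*[\Lambda]$ coming from $\phi\circ i_1=i_2$ and the graph (lci) factorization that substitutes for flat pullback. One small caution: under the purity hypothesis the refined Gysin pullback is represented by a canonical cycle supported on $\phi^{-1}(\Lambda)$ whose multiplicities need not agree with those of the scheme-theoretic preimage, so it is safer to take that Gysin cycle as the meaning of $\phi^*\Lambda$ (which is how the lemma is applied in the paper) rather than asserting it equals $[\phi^{-1}(\Lambda)]$.
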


\subsection{Aluffi's inversion formula}\label{inv}
Let $X$ be a locally closed set in $\bP^n$, so that the function $1_X$ is constructible on $\bP^n$. Then $$c_{SM}(X):=c_*(1_X)=\sum_{j\geq 0} c_j [\bP^j] \in A_*(\bP^n).$$ Let
$$\gamma_X(\tt):=\sum_{j\geq 0} c_j \tt^j$$
be the polynomial of degree $\leq n$ obtained from $c_{SM}(X)$ by replacing $\bP^j$ by $\tt^j$. Let $X_j=X \cap L_{n-j}$, where $L_{n-j}$ is a generic linear subspace of codimension $j$ in $\bP^n$. Consider the generating polynomial of degree $\leq n$ of the Euler characteristics of these sections, defined as
$$\chi_X(\tt):=\sum_{j \geq 0} \chi(X_j) \cdot (-\tt)^j.$$

In \cite{Alu2}, Aluffi showed that for any locally closed set $X$ in $\bP^n$, the polynomials $\gamma_X(\tt)$ and $\chi_X(\tt)$ carry precisely the same information. 
In order to formulate the result from loc. cit., consider the following linear transformation:
$$p(\tt) \longmapsto \mathcal{I}(p):=\frac{\tt \cdot p(-\tt-1)+p(0)}{\tt+1},$$
and note that if $p(\tt)$ is a polynomial, then $\mathcal{I}(p)$ is a polynomial of the same degree. Furthermore, $\mathcal{I}$ is an {\it involution}, whose effect is to perform a sign-reversing symmetry about $\tt=-1/2$ of the non-constant part of $p$. The main result of \cite{Alu2} is the following:
\begin{theorem}\label{thm_Aluffi}
For every locally closed set $X$ in $\bP^n$, the involution $\mathcal{I}$ interchanges $\gamma_X(\tt)$ and $\chi_X(\tt)$, i.e.,
\begin{equation*}
    \gamma_X=\mathcal{I}(\chi_X), \ \ 
    \chi_X=\mathcal{I}(\gamma_X).
\end{equation*}
\end{theorem}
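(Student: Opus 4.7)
The plan is to exploit linearity in the constructible function $1_X$ to reduce the identity to the case of linear subspaces $\bP^k\subset\bP^n$, where a binomial computation finishes the job. Both $\gamma_X(\tt)$ and $\chi_X(\tt)$ are additive on constructible disjoint unions: $\gamma_X$ by additivity of the CSM transformation $c_*$, and $\chi_X$ by additivity of Euler characteristic combined with the transversality of a sufficiently generic flag to any fixed stratification of $X$. The essential structural input is that $\chi_X$ depends only on $c_{SM}(X)$: one may argue this microlocally via the intersection-theoretic identity $\chi(X\cap L_{n-j})=CC(1_X)\cdot CC(1_{L_{n-j}})$ in $T^*\bP^n$, which by Proposition \ref{pmc} factors through the signed CSM class. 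Since $\{c_{SM}(\bP^k)\}_{k=0}^n$ is upper-triangular in the basis $\{[\bP^j]\}_{j=0}^n$ of $A_*(\bP^n)$, and $\mathcal{I}$ is a $\bZ$-linear involution, verifying $\gamma_{\bP^k}=\mathcal{I}(\chi_{\bP^k})$ for $k=0,\ldots,n$ suffices.

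On $X=\bP^k$, from $c(T\bP^k)=(1+H)^{k+1}$ one reads off
\[
c_{SM}(\bP^k)=\sum_{j=0}^{k}\binom{k+1}{j+1}[\bP^j], \qquad \gamma_{\bP^k}(\tt)=\frac{(1+\tt)^{k+1}-1}{\tt}.
\]
A generic codimension-$j$ subspace meets $\bP^k$ transversely in $\bP^{k-j}$ (empty for $j>k$), so $\chi_{\bP^k}(\tt)=\sum_{j=0}^{k}(k-j+1)(-\tt)^j$. The identity $\gamma_{\bP^k}=\mathcal{I}(\chi_{\bP^k})$ then becomes
\[
(\tt+1)\frac{(1+\tt)^{k+1}-1}{\tt}=\tt\sum_{j=0}^{k}(k-j+1)(\tt+1)^{j}+(k+1),
\]
which, substituting $y=\tt+1$ and telescoping the consecutive differences $a_{j-1}-a_j=1$ for $a_j=k-j+1$, reduces to the standard identity $\sum_{j=1}^{k+1}y^j=y(y^{k+1}-1)/(y-1)$. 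The case $k=1$ already illustrates the mechanism: $\mathcal{I}(2-\tt)=(\tt(\tt+3)+2)/(\tt+1)=\tt+2=\gamma_{\bP^1}(\tt)$.

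The main obstacle is the preliminary claim that $\chi_X(\tt)$ depends only on $c_{SM}(X)$, which is the nontrivial ingredient in the reduction. The cleanest justification is microlocal: realize $\chi(X\cap L_{n-j})$ as a transverse intersection of $CC(1_X)$ with the conormal cycle $T^*_{L_{n-j}}\bP^n$ in $T^*\bP^n$, whence it becomes a linear functional of $CC(1_X)$, equivalent by Proposition \ref{pmc} to the CSM class. A more elementary route is to choose the flag generic with respect to a Whitney stratification of $X$ and use additivity to produce a universal expression $\chi(X\cap L_{n-j})=\sum_i\lambda_{i,j}\,c_i$ in the CSM coefficients. With this factorization secured, the binomial computation above completes the proof.
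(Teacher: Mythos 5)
Your overall architecture is sound, and two of its three pieces are correct: additivity of both $X\mapsto\gamma_X$ and $X\mapsto\chi_X$ (a generic flag works simultaneously for the finitely many pieces of a decomposition), and the verification on linear subspaces, where your formulas $c_{SM}(\bP^k)=\sum_{j}\binom{k+1}{j+1}[\bP^j]$, $\gamma_{\bP^k}(\tt)=((1+\tt)^{k+1}-1)/\tt$, $\chi_{\bP^k}(\tt)=\sum_{j=0}^k(k-j+1)(-\tt)^j$ and the telescoping identity all check out (and since $\mathcal I$ is an involution, one direction suffices). Note also that the paper itself does not prove Theorem \ref{thm_Aluffi}; it quotes it from \cite{Alu2}, where Aluffi argues differently (via the behaviour of CSM classes under general hyperplane sections), so your attempt has to stand on its own.

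It does not yet, because the pivotal claim --- that $\chi_X(\tt)$ depends only on $c_{SM}(X)$ --- is exactly the substance of the theorem, and your justification of it has a genuine gap. The microlocal index formula does express $\chi(X\cap L_{n-j})$, for generic $L_{n-j}$ and up to a universal sign (the sign is immaterial for your reduction), as an intersection number of $CC(1_X)$ with the conormal cycle $T^*_{L_{n-j}}\bP^n$ in $T^*\bP^n$; this is linear in $CC(1_X)$, but linearity in the characteristic cycle is far weaker than dependence only on $c_{SM}(X)$: the group $L(\bP^n)$ of conic Lagrangian cycles has infinite rank while $A_*(\bP^n)\cong\bZ^{n+1}$, so the shadow map $\Lambda\mapsto\bigl(c_j^{T^*\bP^n}(\Lambda)\bigr)_j$ is hugely non-injective, and Proposition \ref{pmc} only identifies the shadow with the signed CSM class --- it does not say that intersection numbers against conormals of generic linear spaces factor through the shadow. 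That factorization is the missing step. It can be supplied: using transitivity of the $PGL_{n+1}(\C)$-action on the incidence variety $\bP(T^*\bP^n)$ and Kleiman transversality, for generic $L$ the closures of $CC(1_X)$ and of $T^*_L\bP^n$ in $\bP(T^*\bP^n\oplus\mathbf{1})$ are disjoint along the divisor at infinity, so the (compactly supported) intersection number equals the degree of the product of their classes in the compact compactification, and by \eqref{sh} the class $\bigl[\overline{CC(1_X)}\bigr]$ carries exactly the same information as the shadow, i.e., as $c_{SM}(X)$ via Proposition \ref{pmc}. Without this (or an equivalent) argument the reduction to $\bP^k$ is unsupported. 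Your proposed ``more elementary route'' does not repair it: genericity of the flag with respect to a Whitney stratification plus additivity expresses $\chi(X\cap L_{n-j})$ in terms of the strata of $X$, not as a ``universal expression in the CSM coefficients'' --- that is precisely the statement to be proven, so that paragraph is circular.
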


More generally, consider a constructible function $\varphi$ on $\bP^n$. Assuming that
\[
c_*(\varphi)=\sum_{j\geq 0} c_j [\bP^j] \in A_*(\bP^n),
\]
we define 
\[
\gamma_\varphi(\tt)\coloneqq \sum_{j\geq 0} c_j \tt^j.
\]
Let $\varphi_j$ be the restriction of $\varphi$ to a generic codimension $j$ linear subspace $L_{n-j}$, and we  consider $\varphi_j$ as a constructible function on $\bP^n$ with support contained in $L_{n-j}$. Then we define
\[
\chi_\varphi(\tt)\coloneqq \sum_{j \geq 0} \chi(\varphi_j) \cdot (-\tt)^j.
\]
Since the constructible functions of the form $1_X$ for irreducible closed subvarieties $X\subset \bP^n$ form a basis of $F(\bP^n)$, i.e., the abelian group of all constructible functions on $\bP^n$, Theorem \ref{thm_Aluffi} can be reformulated as the following corollary. 
\begin{corollary}\label{cor_Aluffi}
For any constructible function $\varphi$ on $\bP^n$, the involution $\mathcal{I}$ interchanges $\gamma_\varphi(\tt)$ and $\chi_\varphi(\tt)$, i.e.,
\begin{equation*}
    \gamma_\varphi=\mathcal{I}(\chi_\varphi), \ \ 
    \chi_\varphi=\mathcal{I}(\gamma_\varphi).
\end{equation*}
\end{corollary}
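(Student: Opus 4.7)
The plan is to derive Corollary \ref{cor_Aluffi} from Theorem \ref{thm_Aluffi} by a straightforward linearity argument. First, I would observe that every constructible function $\varphi \in F(\bP^n)$ can be written as a $\bZ$-linear combination $\varphi = \sum_k a_k \cdot 1_{X_k}$, where each $X_k$ is a locally closed (equivalently, an irreducible closed) subvariety of $\bP^n$; this is the defining property of $F(\bP^n)$ as the free abelian group generated by indicator functions of irreducible closed subvarieties. Since Theorem \ref{thm_Aluffi} has already established the desired involution identities for $\varphi = 1_X$ with $X$ locally closed, it suffices to check that the three operations $\varphi \mapsto \gamma_\varphi$, $\varphi \mapsto \chi_\varphi$, and $\mathcal{I}$ are all $\bZ$-linear and then invoke Theorem \ref{thm_Aluffi} termwise.

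Next, I would verify this linearity. For $\varphi \mapsto \gamma_\varphi$, it follows because MacPherson's Chern class transformation $c_*\colon F(\bP^n) \to A_*(\bP^n)$ is a group homomorphism, and reading off coefficients in the basis $[\bP^0], [\bP^1], \ldots, [\bP^n]$ of $A_*(\bP^n)$ is itself $\bZ$-linear. For $\varphi \mapsto \chi_\varphi$, the restriction assignment $\varphi \mapsto \varphi|_{L_{n-j}}$ (viewed as a constructible function on $\bP^n$ supported in $L_{n-j}$) is linear, the Euler characteristic $\chi\colon F(\bP^n) \to \bZ$ is linear by construction, and when $\varphi = 1_X$ one recovers $\chi(\varphi_j) = \chi(X \cap L_{n-j}) = \chi(X_j)$, which matches the definition of $\chi_X(\tt)$ used in Theorem \ref{thm_Aluffi}. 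Linearity of $\mathcal{I}$ is immediate from its formula, since $p \mapsto p(-\tt-1)$ and $p \mapsto p(0)$ are both linear and the denominator $\tt+1$ is independent of $p$.

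Combining these observations, the identities $\gamma_{1_X} = \mathcal{I}(\chi_{1_X})$ and $\chi_{1_X} = \mathcal{I}(\gamma_{1_X})$ from Theorem \ref{thm_Aluffi} hold on a generating set of $F(\bP^n)$, and therefore extend by $\bZ$-linearity to the identities $\gamma_\varphi = \mathcal{I}(\chi_\varphi)$ and $\chi_\varphi = \mathcal{I}(\gamma_\varphi)$ for every $\varphi \in F(\bP^n)$. There is no real obstacle: the substantive content is entirely contained in Theorem \ref{thm_Aluffi}, and Corollary \ref{cor_Aluffi} is just its reformulation in the language of arbitrary constructible functions, which is the form needed for the applications in Section \ref{sec:4}.
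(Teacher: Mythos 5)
Your proposal is correct and matches the paper's own argument: the paper derives Corollary \ref{cor_Aluffi} from Theorem \ref{thm_Aluffi} precisely by noting that the functions $1_X$ for irreducible closed $X \subset \bP^n$ generate $F(\bP^n)$ and that the relevant assignments are $\bZ$-linear, which is exactly your linearity/termwise argument. Your explicit verification that $\varphi \mapsto \gamma_\varphi$, $\varphi \mapsto \chi_\varphi$, and $\mathcal{I}$ are linear simply spells out what the paper leaves implicit, so there is nothing to add.
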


%%%%%%%%%%%%%%%%%%%%%%

\section{Characteristic cycles and the universal graph embedding}\label{sec:3}

In this section we prove Theorem \ref{thm_main}.

In \cite[Section 3]{Gin}, Ginsburg gave explicit pushforward formula for conic Lagrangian cycles on an open embedding of the complement of a hypersurface. The formula uses explicitly the defining equation of the hypersurface. First, we modify Ginsburg's construction to obtain a global formula, which does not depend on the choice of local defining equations. 

Let $X$ be a smooth complex algebraic variety and let $D=\sum_{i=1}^r D_i$ be a sum of effective divisors. Let $U=X\setminus D$.
For each $i$, denote the total space of $\cO_X(D_i)$ by $\bL_i$, and denote the bundle map by $\pi_i: \bL_i\to X$. 
Since $D_i$ is effective, there is a tautological section of $\bL_i$ defining the divisor $D_i$, which we denote by $\bs_i: X\to \bL_i$. 

Let $\bE$ be the total space of the vector bundle $\cO_X(D_1)\oplus\cdots \oplus\cO_X(D_r)$. Then,
\[
\bE=\bL_1\times_ X \bL_2 \times_X \cdots \times_X\bL_r,
\]
and we denote the bundle map by $\pi: \bE\to X$. Putting the sections $\bs_{i}$ together gives a section of $\bE$:
\[\bs\colon X\hookrightarrow \bE, \quad x\mapsto (\bs_{1}(x), \bs_{2}(x),\dots,\bs_{r}(x)).\]
Pulling back the zero section $X\hookrightarrow \bL_i$ through the (vector bundle) morphism $\bE\to \bL_i$ gives a smooth divisor on $\bE$, which we denote by $\bF_i$. Let $\bF=\sum_{i=1}^r\bF_i$. The following lemma is tautological.
\begin{lemma}\label{lm:tautllog}
Under the above notations, $\bF$ is a simple normal crossing divisor on $\bE$. Moreover, $\bs_i^* \bF_i=D_i$ and $\bs^*\bF=D$. 
\end{lemma}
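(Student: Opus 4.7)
The plan is to reduce both assertions to a direct computation in a local trivialization, since the claims are local on $X$ and depend only on the structure of $\bE$ as a vector bundle equipped with the tautological section $\bs$. I would cover $X$ by affine open subsets $V$ over which every $\bL_i$ trivializes, and choose for each $V$ a local defining equation $f_i \in \cO_X(V)$ for $D_i \cap V$, so that under the identification $\bL_i|_V \cong V \times \C$ the tautological section takes the explicit form $\bs_i|_V \colon x \mapsto (x, f_i(x))$. On such a $V$, the fiber product $\bE|_V$ trivializes as $V \times \C^r$ with fiber coordinates $t_1,\dots,t_r$; each $\bF_i$ is cut out by $\{t_i = 0\}$; and the combined section becomes $\bs|_V \colon x \mapsto (x, f_1(x),\dots,f_r(x))$.

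With this local model in hand, the first claim is immediate: $\bF = \sum_i \bF_i$ is, locally, the union $\bigcup_i \{t_i = 0\}$ of coordinate hyperplanes in $V \times \C^r$, which is manifestly a simple normal crossing divisor. For the pullback identities I would use the factorization $p_i \circ \bs = \bs_i$, where $p_i \colon \bE \to \bL_i$ is the bundle projection, so that $\bs^* \bF_i = (p_i \circ \bs)^*(\text{zero section of } \bL_i) = \bs_i^*(\text{zero section of }\bL_i)$; in the local model this is the scheme $\{f_i = 0\} = D_i \cap V$. Summing over $i$ (equivalently, pulling back the sum of the divisors $\bF_i$) then gives $\bs^* \bF = D$.

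All that remains is to verify that these local constructions patch globally, which is formal: the transition functions for the $\bL_i$ are units, so they transform the local defining sections $f_i$ without altering their vanishing loci, and the fiber coordinates $t_i$ on $\bE$ transform compatibly. As the lemma is labeled \emph{tautological}, there is no genuine obstacle; once one writes down the local trivialization, both statements can be read off directly. The only modest subtlety is keeping straight the canonical interpretation of $\bs_i^* \bF_i$ via the factorization through $p_i$, but this requires no computation beyond what is already encoded in the definitions of $\bE$, $\bF_i$, and $\bs$.
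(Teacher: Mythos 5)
Your proof is correct and matches the paper's treatment: the paper gives no argument beyond declaring the lemma tautological, and its subsequent local discussion uses exactly the trivialization $\bE|_{\cU}\simeq \cU\times\C^r$ with $\bs$ the graph embedding $x\mapsto (x,f_1(x),\dots,f_r(x))$ that you spell out. Your reading of $\bs_i^*\bF_i$ via the factorization through the projection $\bE\to\bL_i$ is the intended interpretation, and your local check that $\bF$ is a union of fiber-coordinate hyperplanes (hence simple normal crossing) and that $\bs^*\bF_i=\{f_i=0\}=D_i$ is precisely the verification the paper leaves implicit.
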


Next, we consider the local picture. Suppose that on some open set $\mathcal U\subseteq X$, there exist regular functions $f_i$, $i=1,\ldots,r$, such that the zero locus of each $f_i$ is equal to $D_i$. Notice that the section $\bs_i$ of $\bL_i$ is defined as the section ``$\mathbf{1}$" of $\cO_X(D_i)$. Thus, $\frac{1}{f_i}\cdot \mathbf{1}$ defines a nonzero section of $\bL_i$ on $\mathcal{U}$, and $(\frac{1}{f_1}, \ldots, \frac{1}{f_r})$ gives a local trivialization $\bE|_\cU\simeq \cU\times \C^r$. Under this trivialization, the section $\bs$ can be written as the graph-embedding:
\[\bs\colon \cU\hookrightarrow \cU\times \C^r, \quad x\mapsto (x,f_1(x),\dots, f_r(x)).\]
Let $\underline{t}=(t_1, \ldots, t_r)$ be the coordinates of the factor $\C^r$, and we write $\C^r_{\underline{t}}$ to emphasize the chosen coordinates. 
Under the local trivialization $\bE|_\cU\simeq\cU\times \C^r$, we have the induced local trivializations
for the logarithmic cotangent bundle
\[T^*(\bE,\bF)|_{\cU\times \C^r}\simeq T^*\cU\times \C^r_{\underline{t}}\times \C^r_{\underline{s}}\]
where $\C^r_{\underline{t}}\times \C^r_{\underline{s}}$ is the total space of the logarithmic cotangent bundle of $\C^r_{\underline{t}}$ with respect to the coordinate divisors, and $\underline{s}=(s_1, \ldots, s_r)$ with each $s_i$ corresponding to the logarithmic tangent vector $-t_i\partial_{t_i}$.

Using the section $\bs: X\to \bE$, we can identify $X$ as the closed subvariety $\bs(X)$ of $\bE$, and hence open subvarieties of $X$ as a locally closed subvarieties of $\bE$. By the above trivialization, we have
\be\label{eq:trivizlogpb}
T^*(\bE,\bF)|_{\mathcal U}=\bs^*(T^*(\bE,\bF)|_{\cU\times \C^r})\simeq T^*\cU\times \C_{\underline{s}}^r.
\ee
Let $\Lambda\subseteq T^*U$ be an irreducible conic Lagrangian cycle. Following \cite[\S 2.1]{Gin}, locally on $\cU^\circ\coloneqq\cU\setminus D$, we can define an  $(n+r)$-cycle in $T^*\cU^\circ\times \C_{\underline{s}}^r$:
\begin{equation}\label{eq_sharp}
\Lambda^\sharp|_{\cU^\circ}\coloneqq\left\{\left(x,\xi+\sum_{i=1}^r s_id\log f_i(x),s\right)\middle| (x,\xi)\in \Lambda \textup{ and } s_i\not=0 \textup{ for all }i \right\}.
\end{equation}
It follows from the next lemma that the definition of $\Lambda^\sharp|_{\cU^\circ}$ does not depend on the choice of the functions $f_i$ and the local cycles glue together to a global $(n+r)$-cycle 
\[
\Lambda^\sharp\subseteq T^*U\times\C^r\simeq T^*(\bE,\bF)|_U.
\]
Moreover, it also follows from the next lemma that $\Lambda^\sharp$ is a conic cycle. 

\begin{lemma}\label{lm:globalsharp}
Under the above notations, let $V=\bE\setminus \bF$. Pulling back one-forms defines a surjective bundle map
\[
q: U\times_V T^*V=T^*V|_U\rightarrow T^*U. 
\]
Then, over the open set $\mathcal U^\circ=\mathcal U\setminus D$, we have an equality of algebraic cycles
\[\Lambda^\sharp|_{\mathcal U^\circ}=(q^{-1}\Lambda)|_{\mathcal U^\circ}.\]
\end{lemma}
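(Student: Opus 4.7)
The statement is local on $X$ while $q$ is globally defined, so the strategy is to compute $q$ explicitly in a trivializing chart, match the result with the defining formula \eqref{eq_sharp} of $\Lambda^\sharp$, and let the intrinsic nature of $q^{-1}\Lambda$ take care of independence from the choice of defining equations and of the gluing into a global cycle.

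First, I fix a chart $\cU\subseteq X$ on which each line bundle $\cO_X(D_i)$ is trivialized by the local frame $1/f_i$, so that $\bE|_{\cU}\simeq \cU\times \C^r_{\underline t}$, the section $\bs$ becomes the graph embedding $x\mapsto(x,f_1(x),\dots,f_r(x))$, and the divisor $\bF$ becomes the coordinate divisor $\sum_i\{t_i=0\}$. Under the identification \eqref{eq:trivizlogpb}, a point $(x,a,s)\in T^*\cU\times\C^r_{\underline s}$ represents the logarithmic covector $a-\sum_i s_i\, d\log t_i$ at $\bs(x)$; the minus sign is forced by the convention in the excerpt that $s_i$ equals the value of the covector on $-t_i\partial_{t_i}$.

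Next, since $\bs(\cU^\circ)\subseteq V$, the map $q$ at a point of $\cU^\circ$ is simply the ordinary cotangent pullback $\bs^*\colon T^*_{\bs(x)}V\to T^*_xU$. The chain rule gives $\bs^*\,d\log t_i=d\log f_i$, so in the trivialization $q$ sends $(x,a,s)$ to $(x,a-\sum_i s_i\, d\log f_i(x))$. Thus $(x,a,s)$ lies in $q^{-1}(\Lambda)$ over $\cU^\circ$ exactly when $a-\sum_i s_i\, d\log f_i(x)\in\Lambda_x$; setting $\xi:=a-\sum_i s_i\, d\log f_i(x)$ and rewriting $a=\xi+\sum_i s_i\, d\log f_i(x)$ recovers precisely the defining relation of $\Lambda^\sharp|_{\cU^\circ}$ in \eqref{eq_sharp}.

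Finally, the condition $s_i\ne 0$ for all $i$ appearing in \eqref{eq_sharp} carves out a Zariski dense subset of $\C^r_{\underline s}$, so $\Lambda^\sharp|_{\cU^\circ}$ and $(q^{-1}\Lambda)|_{\cU^\circ}$ determine the same pure $(n+r)$-dimensional algebraic cycle over $\cU^\circ$. Because the right-hand side is intrinsically defined, the left-hand side is automatically independent of the choice of equations $f_i$, and the locally defined pieces patch to a global cycle $\Lambda^\sharp\subseteq T^*(\bE,\bF)|_U$ equal to $q^{-1}\Lambda$; this also justifies the assertions made in the paragraph preceding the statement of the lemma. The only delicate point is keeping the sign conventions in \eqref{eq:trivizlogpb} straight throughout; once this bookkeeping is fixed, the whole argument reduces to the chain rule for $\bs^*$.
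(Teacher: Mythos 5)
Your proposal is correct and follows essentially the same route as the paper: work in the local trivialization where $\bs$ is the graph embedding, use that $t_i\circ\bs=f_i$ (equivalently $\bs^*d\log t_i=d\log f_i$) to compute $q$ explicitly, and match the result with the defining formula \eqref{eq_sharp}. The only difference is cosmetic: you identify $(q^{-1}\Lambda)|_{\mathcal U^\circ}$ outright as the parametrized set and invoke density of the locus $s_i\neq 0$, whereas the paper first proves the inclusion $\Lambda^\sharp|_{\mathcal U^\circ}\subset (q^{-1}\Lambda)|_{\mathcal U^\circ}$ by a pairing argument and then upgrades to equality by comparing irreducible $(n+r)$-dimensional cycles; your bookkeeping of the sign convention for $\underline{s}$ agrees with the paper's.
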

\begin{proof}
Recall that, locally on $\mathcal U^\circ$, the section $\bs: X\to \bE$ is given by $$\bs(x)=(x, f_1(x), \ldots, f_r(x)).$$ 
Thus, as the dual map of $q$, the pushfoward map on tangent bundle is of the form
\begin{equation}\label{eq_q1}
q^\vee: TU\to \bs^*TV=TV|_X, \quad v\mapsto v+\sum_{i=1}^r{df_i(v)}\bs^*(\partial_{t_i})
\end{equation}
for any tangent vector $v$ in $TU$. Locally on $\mathcal U^\circ$, under the above coordinate system, 
\[
T^*V=T^*U\times T^*\C^r=T^*U\times \C^r_{\underline{t}}\times \C^r_{\underline{s}}.
\]
Then as the dual map of \eqref{eq_q1}, the bundle map $q$ at a fiber $T_x^*V$ ($x\in U$) is of the form
\begin{equation}\label{eq_q2}
q: T^*V|_U\rightarrow T^*U, \quad \xi+\sum_{i=1}^r\lambda_idt_i\mapsto \xi+\sum_{i=1}^r\lambda_i df_i,
\end{equation}
where $\xi\in T_x^*U$. 

Now, we show that $\Lambda^\sharp|_{\mathcal U^\circ}\subset (q^{-1}\Lambda)|_{\mathcal U^\circ}$, or equivalently, $q(\Lambda^\sharp|_{\mathcal U^\circ})\subset \Lambda$. Since $\Lambda$ is conic, for any point $x\in \mathcal{U}^\circ$ and $v\in T^*_xU$ such that the pairing between $v$ and any element in $ \Lambda\cap T^*_xU$ is zero, by \eqref{eq_sharp} we need to show that 
\[
q\left(\xi+\sum_{i=1}^r s_id\log f_i+\sum_{i=1}^{r}s_i (-d\log t_i)\right)(v)=0
\]
for any $\xi\in\Lambda\cap T^*_xU$.
In fact, by \eqref{eq_q2}, we have
\begin{align*}
q\left(\xi+\sum_{i=1}^r s_id\log f_i+\sum_{i=1}^{r}s_i (-d\log t_i)\right)(v)=&\left(\xi+\sum_{i=1}^r s_id\log f_i\right)(v)-\sum_{i=1}^{r}s_i \frac{df_i}{t_i}(v)\\
=&\xi(v)+\sum_{i=1}^r s_i\left(\frac{df_i}{f_i}(v)-\frac{df_i}{t_i}(v)\right)\\
=&\xi(v)\\
=&0
\end{align*}
where the second last equality follows from the fact that the image of $\bs: U\to V$ is cut out by equations $t_i=f_i$ for $1\leq i\leq r$ and the last equality follows from the assumption that the pairing between $v$ and any element in $ \Lambda\cap T^*_xU$ is zero. 

Therefore, we have proved the inclusion that $\Lambda^\sharp|_{\mathcal U^\circ}\subset (q^{-1}\Lambda)|_{\mathcal U^\circ}$. Since $\Lambda$ is irreducible, the inclusion is between two irreducible closed $(n+r)$-cycles in $T^*\cU^\circ\times \C_{\underline{s}}^r$. Hence the inclusion must be an equality, that is, $\Lambda^\sharp|_{\mathcal U^\circ}= (q^{-1}\Lambda)|_{\mathcal U^\circ}$.
\end{proof}

The above lemma shows that the global cycle $\Lambda^\sharp\subset T^*(\bE,\bF)|_U$ is well-defined, and 
\begin{equation}
\Lambda^\sharp=q^{-1}\Lambda.
\end{equation} 

\begin{lemma}\label{lm:adclem}
If $D=\sum_{i=1}^r D_i$ is a simple normal crossing divisor on $X$, then the pullback map of logarithmic forms, $p$, is a surjective map of vector bundles over $X$:
\[
\begin{tikzcd}
\bs^*T^*(\bE,\bF)= T^*(\bE,\bF)|_X\arrow[rr,"p"]\arrow[dr]& & T^*(X,D)\arrow[dl] \\
& X &
\end{tikzcd}
\]
In particular, as restriction of $p$, the pullback map $q: T^*V|_U\rightarrow T^*U$ is also a surjective map of vector bundles. 
\end{lemma}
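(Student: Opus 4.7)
The plan is to reduce surjectivity of $p$ to a fiberwise verification via a local computation adapted to the SNC divisor $D$. Both $\bs^*T^*(\bE,\bF)$ and $T^*(X,D)$ are locally free $\cO_X$-modules (of ranks $n+r$ and $n$, with $n=\dim X$), so surjectivity of $p$ as a bundle map is equivalent to surjectivity of $p_x$ at each closed point $x\in X$.

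Fix such a point $x$ and set $I=\{i:x\in D_i\}$, $k=|I|$. Using the SNC hypothesis, I would choose regular parameters $x_1,\ldots,x_n$ on a neighborhood $\cU$ of $x$, reindexed so that $D_i\cap\cU=\{x_i=0\}$ for $i\in I=\{1,\ldots,k\}$. I would also choose trivializations $\cO_X(D_i)|_\cU\simeq\cO_\cU$ under which the tautological section $\bs_i$ is represented by a regular function $f_i$ with $f_i=x_i$ for $i\in I$, and $f_i\in\cO_\cU^\times$ for $i\notin I$. These choices induce a trivialization $\bE|_\cU\simeq\cU\times\C^r$ in which $\bF_i=\{t_i=0\}$ and $\bs$ is the graph embedding $x\mapsto(x,f_1(x),\ldots,f_r(x))$.

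In these coordinates the computation becomes transparent. A local frame of $T^*(\bE,\bF)$ is $\{dx_j,\ \tfrac{dt_i}{t_i}\}$, and a local frame of $T^*(X,D)$ is $\{\tfrac{dx_j}{x_j}\ (j\le k),\ dx_j\ (j>k)\}$. The map $p$ sends $dx_j\mapsto dx_j$ and $\tfrac{dt_i}{t_i}\mapsto\tfrac{df_i}{f_i}$; for $i\in I$ this equals $\tfrac{dx_i}{x_i}$, a basis element of $T^*(X,D)$, and for $i\notin I$ it equals $d\log f_i$, a regular one-form (so a fortiori a log form along $D$). Hence the image of $p$ on $\cU$ contains both $dx_1,\ldots,dx_n$ and $\tfrac{dx_1}{x_1},\ldots,\tfrac{dx_k}{x_k}$, i.e.\ a complete local frame of $T^*(X,D)$, which proves surjectivity of $p_x$. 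The final assertion about $q$ is then immediate: restricting to $U=X\setminus D$, one has $D\cap U=\emptyset$ and $\bF\cap V=\emptyset$, so both log cotangent bundles restrict to ordinary cotangent bundles and $q=p|_U$ inherits surjectivity from $p$.

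The only genuinely delicate step is arranging the local coordinates and trivializations simultaneously so that $f_i=x_i$ for every $i\in I$; this is exactly where the SNC assumption is used, letting one take each $D_i$ through $x$ to be a coordinate hyperplane and then absorbing the remaining unit ambiguity into the trivialization of $\cO_X(D_i)$. Once the local model is in place, everything else is formal.
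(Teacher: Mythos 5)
Your proof is correct and follows essentially the same route as the paper: choose local coordinates adapted to the SNC divisor with $f_i=x_i$ for the components through the point (absorbing units into the trivializations of $\cO_X(D_i)$), view $\bs$ as the graph embedding, and conclude surjectivity locally. The only cosmetic difference is that you verify surjectivity directly on the frame elements $dx_j\mapsto dx_j$, $dt_i/t_i\mapsto df_i/f_i$, whereas the paper exhibits adapted coordinates on $\bE$ in which $\bs(X)$ is a coordinate subspace transverse to $\bF$ and reads off the same conclusion.
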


\begin{proof}
This follows from the fact that $\bs$ induces an embedding of log pairs $\bs: (X, D)\to (\bE, \bF)$. In other words, the image of $\bs: X\to \bE$ intersects the divisor $\bF$ transversally in the stratified sense. 

More precisely, we will show that there are (analytic) local coordinates of $\bE$, such that $\bF$ is defined by the product of a subset of the coordinates and the image of $\bs: X\to \bE$ is cut out by a disjoint subset of coordinates. Then the desired statement follows immediately. 

Choose a small analytic neighborhood $\mathcal{U}$ of a given point $y\in X$, such that  there exist holomorphic functions $f_1, \ldots, f_r$ on $\mathcal{U}$ defining the divisors $D_1, \ldots, D_r$. 
Let $x=(x_1, \ldots, x_n)$ be a set of coordinates of $\mathcal{U}$. Without loss of generality, we assume that $y\in D_1\cap \cdots \cap D_{r_0}$ and $y\notin D_i$ for $i>r_0$. Since $D$ is simple normal crossing, we can assume that $f_i=x_i$ for $1\leq i\leq r_0$. 
Then, locally, the embedding $\bs|_{\mathcal U}: \mathcal{U}\to \bE|_{\mathcal U}=\mathcal{U}\times \C^r_{\underline{t}}$ is given by 
\[
x\mapsto (x, f_1(x), \ldots, f_r(x))=(x, x_1, \ldots, x_{r_0}, f(x_{r_0+1}), \ldots, f(x_r)).
\]
The variables $(x, \underline{t})=(x_1, \ldots, x_n, t_1, \ldots, t_r)$ is a set of coordinates of $\bE|_{\mathcal U}=\mathcal{U}\times \C^r_{\underline{t}}$. Thus, 
\begin{equation}\label{eq_coordinate}
\big(t_1, \ldots, t_{r_0}, x_{r_0+1}, \ldots, x_r, t_1-x_1, \ldots, t_{r_0}-x_{r_0}, t_{r_0+1}-f_{r_0+1}, \ldots, t_r-f_r\big)
\end{equation}
also form a set of coordinates of $\bE|_{\mathcal U}=\mathcal{U}\times \C^r_{\underline{t}}$. Since $\mathcal{U}$ is centered at $y$ with $y\notin D_i$ for $i>r_0$, we can assume that $\mathcal{U}$ does not intersect $D_i$ for $i>r_0$. Now, the image of $\bs|_{\mathcal{U}}$ is cut out by
\[
t_1-x_1=0, \cdots, t_{r_0}-x_{r_0}=0, t_{r_0+1}-f_{r_0+1}=0, \cdots, t_r-f_r=0,
\]
and the divisor $\bF$ in $\bE|_\mathcal{U}$ is defined by $t_1\cdots t_{r_0}=0$. Therefore, the set of coordinates \eqref{eq_coordinate} satisfy the desired property. 
\end{proof}

\begin{remark}
Suppose $D$ is simple normal crossing but not smooth. If we take $\bE$ to be the line bundle 
$\mathcal{O}_X$ on $X$, then we can similarly define an irreducible divisor $\bF$ of $\bE$, the graph embedding $\bs: X\to \bE$, and the projection 
\[
p: T^*(\bE, \bF)|_X\to T^*(X, D).
\]
However, in this case, the map $p$ will not be surjective. For example, assume $\dim X=2$ and $(x_1, x_2)$ are locally coordinates of $X$ such that locally $D$ is defined by $x_1x_2=0$. One can easily see that the image of $p$ on the fiber $T^*(X, D)|_{(0, 0)}$ is 1-dimensional and spanned by $d\log(x_1x_2)$. 
Therefore, it is necessary to introduce a variable $t_i$ to each irreducible component $D_i$. 
\end{remark}

Similar to the construction in \cite[Section 3]{Gin}, we define $\overline{\Lambda^\sharp}$ to be the closure of $\Lambda^\sharp$ inside $\bs^*T^*(\bE,\bF)=T^*(\bE, \bF)|_{X}$. 
\begin{corollary}\label{cor_psharp}
Let $\Lambda\subseteq T^*U$ be a conic Lagrangian cycle.
If $D=\sum_{i=1}^r D_i$ is simple normal crossing, then 
\[\overline{\Lambda^\sharp}=p^{-1}(\overline\Lambda^{\log}),\]
where $\overline\Lambda^{\log}$ is the closure of $\Lambda$ inside $T^*(X,D)$.
\end{corollary}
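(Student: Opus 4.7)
The plan is to show that $p^{-1}(\overline{\Lambda}^{\log})$ and $\overline{\Lambda^\sharp}$ are both irreducible closed subvarieties of $T^*(\bE,\bF)|_X$ of the same dimension, with the latter contained in the former. Since the constructions $(\cdot)^\sharp$, $\overline{(\cdot)}^{\log}$, and the flat pullback $p^{-1}(\cdot)$ are all linear in the Lagrangian cycle, I would first reduce to the case when $\Lambda$ is irreducible of dimension $n=\dim X$.

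The key step uses Lemma \ref{lm:adclem}: because $p$ is a surjective morphism of vector bundles on $X$, its kernel $K$ is locally free of rank $r=\dim\bE-\dim X$, so for any irreducible closed subvariety $Z\subseteq T^*(X,D)$ the preimage $p^{-1}(Z)\to Z$ is Zariski-locally trivial with fiber $K$ and therefore irreducible of dimension $\dim Z+r$. Applied to $Z=\overline{\Lambda}^{\log}$, this yields an irreducible closed subvariety $p^{-1}(\overline{\Lambda}^{\log})\subseteq T^*(\bE,\bF)|_X$ of dimension $n+r$, matching the dimension of $\overline{\Lambda^\sharp}$, which is the closure of the irreducible $(n+r)$-dimensional variety $\Lambda^\sharp=q^{-1}(\Lambda)$.

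The final step is to exhibit $\Lambda^\sharp$ as a dense open subset of $p^{-1}(\overline{\Lambda}^{\log})$. I would first note that $p$ restricts to $q$ over $U$: by Lemma \ref{lm:tautllog} we have $\bs^*\bF=D$, so $\bs(U)\subseteq V=\bE\setminus\bF$, which forces $T^*(\bE,\bF)|_{\bs(U)}=T^*V|_U$, while $T^*(X,D)|_U=T^*U$ since $D\cap U=\emptyset$. Combined with the identity $\Lambda^\sharp=q^{-1}(\Lambda)$ from Lemma \ref{lm:globalsharp}, this yields
\[
p^{-1}(\overline{\Lambda}^{\log})\cap\bigl(T^*(\bE,\bF)|_U\bigr)=q^{-1}(\Lambda)=\Lambda^\sharp,
\]
so $\Lambda^\sharp$ is a nonempty open, hence dense, subvariety of the irreducible variety $p^{-1}(\overline{\Lambda}^{\log})$; taking closures in $T^*(\bE,\bF)|_X$ then gives $\overline{\Lambda^\sharp}=p^{-1}(\overline{\Lambda}^{\log})$. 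The proof is essentially formal bookkeeping on top of Lemmas \ref{lm:tautllog}, \ref{lm:globalsharp}, and \ref{lm:adclem}; I do not anticipate any substantive obstacle beyond correctly setting up the dimension count and invoking surjectivity of $p$.
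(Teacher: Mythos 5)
Your proof is correct and follows essentially the same route as the paper: it combines Lemma \ref{lm:globalsharp} ($\Lambda^\sharp=q^{-1}\Lambda$) with the surjectivity of $p$ from Lemma \ref{lm:adclem}, and concludes by an irreducibility/dimension argument. Phrasing the last step as ``$\Lambda^\sharp$ is a dense open subset of the irreducible closed set $p^{-1}(\overline{\Lambda}^{\log})$'' is just a mild repackaging of the paper's comparison of two irreducible $(n+r)$-cycles, one contained in the other.
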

\begin{proof}
By Lemma \ref{lm:adclem}, we have the following cartesian square
\[
\begin{tikzcd}
T^*V|_U \ar[r, "q"]\ar[d, hook]& T^*U \ar[d, hook]\\
 T^*(\bE, \bF)|_X \ar[r, "p"]&T^*(X, D),
\end{tikzcd}
\]
where $V=\bE\setminus \bF$ and both horizontal maps are surjective maps of vector bundles. Since $p^{-1}(\overline\Lambda^{\log})$ is closed and it contains $\Lambda^\sharp$, the inclusion $\overline{\Lambda^\sharp}\subset p^{-1}(\overline\Lambda^{\log})$ follows. On the other hand, both $\overline{\Lambda^\sharp}$ and $p^{-1}(\overline\Lambda^{\log})$ are irreducible $(n+r)$-cycles in $ T^*(\bE, \bF)|_X$. Hence, they must be equal to each other, that is, $\overline{\Lambda^\sharp}=p^{-1}(\overline\Lambda^{\log})$.
\end{proof}

For the rest of this section, we assume that the divisor $D=\sum_{i=1}^r D_i$ is simple normal crossing. 
Consider the following diagram of vector bundles on $X$,
\be\label{diag:factorpsi}
\begin{tikzcd}
T^*X\arrow[r,"\pi^*"] \arrow[rd, "\iota"]&T^*\bE|_X\arrow[r,"p'"]\arrow[d, "\phi_{\bE}"]& T^*X\arrow[d, "\phi"]\\
& T^*(\bE, \bF)|_X\arrow[r,"p"]&T^*(X, D),
\end{tikzcd}
\ee
where both $\phi$ and $\phi_\bE$ are defined by considering a one-form as a logarithmic one form, the map $p'$ is the pullback map of one-forms by $\bs$, the map $\pi^*$ is the pullback map of one-forms by $\pi: \bE\to X$, and $\iota\coloneqq \phi_\bE\circ \pi^*$. 

Since our local description of $\Lambda^\sharp$ in \eqref{eq_sharp} is the same as Ginsburg's $\Lambda^\sharp_{\underline{s}}$, we have the following theorem.
\begin{theorem}[{\cite[Theorem 3.2]{Gin}, see also \cite[Theorems 3.4]{FK}}]\label{thm_GFK}
Let $\sF^\centerdot$ be any constructible complex on $U$, with characteristic cycle   $CC(\sF^\centerdot)=\sum_{k}n_k\Lambda_k$. Then, as cycles, 
\[
CC(Rj_*(\sF^\centerdot))=\sum_{k}n_k\iota^*\overline{\Lambda^\sharp_k}.
\]
\end{theorem}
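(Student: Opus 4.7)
The plan is to reduce to the case of a single smooth divisor by an induction on the number of components of $D$, and then to invoke a microlocal deformation argument to compute $CC(Rj_*\sF^\centerdot)$.

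First I would reduce to the case $r = 1$. The open embedding $j\colon U \hookrightarrow X$ factors as $U = X_r \hookrightarrow X_{r-1} \hookrightarrow \cdots \hookrightarrow X_0 = X$, where $X_i = X \setminus (D_1 \cup \cdots \cup D_i)$. Because the $D_i$ are smooth and cross normally, at each step one removes a single smooth divisor in a smooth ambient space, and $Rj_* = R(j_1)_* \circ \cdots \circ R(j_r)_*$ factors accordingly. On the cotangent side, the $\sharp$-construction decomposes as an iteration: adjoining one fiber coordinate $s_i$ at a time (for $f_i$) produces the same cycle $\Lambda^\sharp$, because the one-forms $d\log f_i$ are algebraically independent and $T^*(\bE, \bF)|_X$ factors as an iterated fiber product over $X$. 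The map $\iota$ decomposes accordingly through intermediate logarithmic bundles. By additivity of both sides in $\Lambda$ over $K_0(D^b_c(U))$, it suffices to prove the identity for a single irreducible $\Lambda$ and a single smooth divisor $D$.

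For $r = 1$ with $D = \{f = 0\}$ locally, the core input is the microlocal deformation formula: for any constructible complex $\sF^\centerdot$ on $U$ with $CC(\sF^\centerdot) = \Lambda$,
\[
CC(Rj_*\sF^\centerdot) \;=\; \lim_{s \to \infty} \bigl[\Lambda + s\, d\log f\bigr],
\]
where $\Lambda + s\, d\log f$ denotes the translate of $\Lambda$ by the one-form $s\, d\log f \in T^*U$, and the limit is interpreted as the cycle-theoretic specialization at $s = \infty$ of the family parameterized by $s \in \C^*$. This is the classical microlocal computation underlying \cite[Thm.~3.2]{Gin} and it can be justified either by a Morse-theoretic analysis of local multiplicities (testing $CC$ against generic covectors via the Dubson--Kashiwara index formula), or in the $\mathcal{D}$-module realization, where multiplication by $f^s$ followed by a suitable $s \to \infty$ limit effects the functor $Rj_*$ via the Bernstein--Sato $b$-function of $f$.

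To conclude, I would identify this microlocal limit with $\iota^*\overline{\Lambda^\sharp}$. In the trivialization \eqref{eq:trivizlogpb}, the cycle $\Lambda^\sharp \subset T^*U \times \C^r_{\underline{s}}$ records exactly the family $\{\Lambda + s\, d\log f : s \in \C^*\}$ with $s$ as its own coordinate. Closing inside the logarithmic compactification $T^*(\bE,\bF)|_X$ absorbs the $s = 0$, $s = \infty$, and over-$D$ limits of the family; pulling back by $\iota$, which in local coordinates corresponds to setting the $dt$-component to zero and is adjoint to the surjection $T^*\bE|_X \twoheadrightarrow T^*X$, picks out precisely the $s = \infty$ fiber of the family. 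The hardest step is the limit identification in the previous paragraph: an alternative Fu--Kato style approach specializes along the normal cone to $D$ and tracks $CC$ through the resulting specialization morphism, but either route ultimately reduces to verifying compatibility of the microlocal shift by $s\,d\log f$ with the sheaf-theoretic functor $Rj_*$.
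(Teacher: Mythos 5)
Your overall architecture is the same as the paper's: identify the global cycle $\Lambda^\sharp$ with the Ginsburg/Franecki--Kapranov family of translates $\Lambda+\sum_i s_i\,d\log f_i$, and reduce the theorem to their limit formula for $CC(Rj_*\sF^\centerdot)$. But there is a genuine error in the key step: the limit is taken as $\underline{s}\to 0$, not $s\to\infty$. The formula $CC(Rj_*\sF^\centerdot)=\lim_{s\to\infty}[\Lambda+s\,d\log f]$ is false as stated --- already for $X=\C$, $D=\{0\}$, $\Lambda$ the zero section, the family $\Lambda+s\,\frac{dx}{x}=\{x\xi=s\}$ has flat limit $T^*_\C\C+T^*_0\C$ at $s=0$ (which is, up to the coefficient $n_k=-1$, the correct answer for $Rj_*\C_{\C^*}$), while at $s=\infty$ the cycles escape to infinity in the fibers and no limit exists. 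Your error is also internally inconsistent with your own (correct) description of $\iota$: since $\iota$ sends $\xi\mapsto(\xi,0,\ldots,0)$ and its image is cut out by $s_1=\cdots=s_r=0$, the pullback $\iota^*\overline{\Lambda^\sharp}$ is the fiber of the closure over $\underline{s}=0$, i.e.\ the $\underline{s}\to 0$ specialization --- it cannot ``pick out the $s=\infty$ fiber.'' With $s\to 0$ substituted throughout, your last paragraph becomes essentially the paper's proof, which simply observes that $\Lambda^\sharp$ coincides with the total space of $\Lambda^\sharp_{\underline{s}}$ from \cite[Equation (3.3)]{FK} under the trivialization \eqref{eq:trivizlogpb} and then quotes \cite[Theorem 3.4]{FK}.

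A secondary issue is your reduction to $r=1$ by peeling off one divisor at a time. The paper does not do this: it cites the multi-parameter statement of Franecki--Kapranov directly, in which all $r$ variables $s_1,\ldots,s_r$ are adjoined at once and the cycle is the closure of the locus where \emph{all} $s_i\neq 0$ (this is the role of the condition $s_i\neq 0$ in \eqref{eq_sharp}). Your claim that ``the $\sharp$-construction decomposes as an iteration'' needs justification: the fiber over $\underline{s}=0$ of the closure of an $r$-parameter family over $(\C^*)^r$ need not agree a priori with the iterated one-parameter specializations $\lim_{s_1\to 0}\cdots\lim_{s_r\to 0}$, and matching $R(j_1)_*\cdots R(j_r)_*$ with the single global $\iota^*\overline{\Lambda^\sharp}$ requires exactly this comparison. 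This is fixable, but it is an extra burden your route takes on and then leaves unaddressed, whereas the citation the theorem itself advertises already covers the simultaneous, multi-divisor case.
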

\begin{proof}
Given an irreducible conic Lagrangian cycle $\Lambda\subset T^*U$, by \eqref{eq:trivizlogpb}, 
\[
T^*(\bE, \bF)|_{\mathcal U}\cong T^*\mathcal{U}\times \C^r_{\underline{s}}.
\]
Under this trivialization, our definition of $\Lambda^\sharp$ is the same as the total space of $\Lambda^\sharp_s$ in \cite[Equation (3.3)]{FK}. Moreover, $\iota$ is a map of vector bundles over $X$, and under the above trivialization
\[
\iota: T_x^*X\to T_x^*(\bE, \bF), \quad \xi\to (\xi, 0, \ldots, 0),
\]
for any $x\in \mathcal U$. Thus, the image of $\iota$ is exactly cut out by equations $s_1=\cdots =s_r=0$. Therefore, our $\iota^*\overline{\Lambda^\sharp}$ is exactly equal to $\lim_{{\underline{s}}\to (0, \ldots, 0)}\Lambda_{\underline{s}}^\sharp$, as defined in \cite{FK}. Therefore, the desired equality follows from \cite[Theorem~3.4]{FK}.
\end{proof}

We are ready to prove the first main theorem \ref{thm_main}. 
\begin{proof}[Proof of Theorem \ref{thm_main}]
Let $\sF^\centerdot$ be a constructible complex on $U$, and assume that its characteristic cycle is of the form $CC(\sF^\centerdot)=\sum_k n_k \Lambda_k$. By Corollary \ref{cor_psharp} and Theorem~\ref{thm_GFK}, 
\[
CC(Rj_*(\sF^\centerdot))=\sum_{k}n_k\iota^* p^*\left(\overline{\Lambda_{k}}^{\,\log}\right).
\]
By the commutative diagram \eqref{diag:factorpsi}, since $p'\circ \pi^*=\id$, we have $\iota^* p^*=\phi^*$ and the above equation becomes
\[
CC(Rj_*(\sF^\centerdot))=\sum_{k}n_k\phi^*\left(\overline{\Lambda_{k}}^{\,\log}\right).
\]
By Lemma \ref{lemma_functoriality}
we have:
\[
c_*^{T^*X}\left(\phi^*\left(\overline{\Lambda_{k}}^{\,\log}\right)\right)=c_*^{T^*(X, D)}\left(\overline{\Lambda_{k}}^{\,\log}\right).
\]
Therefore, 
\[
c^{T^*X}_*\left( CC(Rj_*(\sF^\centerdot))\right)
=c^{T^*(X, D)}_*\left(\sum_k n_k\overline{\Lambda_{k}}^{\,\log}\right)=c^{T^*(X, D)}_*\Big(\overline{CC(\sF^\centerdot)}^{\log}\Big),
\]
and we have finished the proof. 
\end{proof}

%%%%%%%%%%%%%%%%%%%%%%%%%%%%

\section{Applications and Examples}\label{sec:4}
In this section, we make use of Theorem \ref{thm_main} for proving Theorems \ref{thm_CSM}, \ref{thm:ML-involution} and \ref{thm_MLb}. 

First, we apply Theorem~\ref{thm_main} to provide a dictionary between the class of the closure of the total space of critical points $\mathfrak{X}(Z)\subset \bP^n
\times \bP^n$ and  the Chern-Mather class $c_{Ma}(Z)$.

\begin{proof}[Proof of Theorem \ref{thm_CSM}]
To apply Theorem \ref{thm_main}, we let $X=\bP^n$ with homogeneous coordinates $[p_1, \ldots, p_n,p_+]$. Let $D_i=\{p_i=0\}$ for $0\leq i\leq n$, $D=\sum_{i=0}^n D_i$ and $U=X\setminus D=(\bC^*)^n$. 

First of all, we notice that the logarithmic cotangent bundle $E\coloneqq T^*(\bP^n, D)$ is a trivial rank $n$ vector bundle. Thus, we can identify the compactification $\overline{E}$ with $\bP^n\times \bP^n$, with the first factor being the base and the second being the fiber. Given a very affine variety $Z\subset U=(\bC^*)^n$, we have by definition that 
\begin{equation}\label{eq_reg}
\mathfrak{X}^\circ(Z)=T^*_{Z_{\textrm{reg}}}(\bC^*)^n.
\end{equation}
Let $\Lambda=T^*_Z(\bC^*)^n$ be the closure of $T^*_{Z_{\textrm{reg}}}(\bC^*)^n$ in $T^*(\bC^*)^n$. Then $\Lambda$ is a conic Lagrangian cycle in $T^*(\bC^*)^n$. 
Taking the closure of \eqref{eq_reg} in $E=T^*(\bP^n, D)$, we have an equality of algebraic cycles: 
\[
\mathfrak{X}(Z) \cap E =\overline{\Lambda}^{\log}.
\]
The closure of $\mathfrak{X}(Z) \cap E$ in $\overline{E}$ is exactly $\mathfrak{X}(Z)$. Hence, by \eqref{sh}, if 
\[
c_*^{T^*(\bP^n,D)}(\overline{\Lambda}^{\log})=\sum_{i=0}^d v_i [\bP^i] \in A_*(\bP^n),
\]
with $d=\dim Z$, then
\[
[\mathfrak{X}(Z)]=\sum_{i=0}^d v_i [\bP^i \times \bP^{n-i}] \in A_*(\bP^n\times \bP^n).
\]
Next consider $Eu_Z$ as a constructible function on $\bP^n$, with value equal to zero {outside $Z$}. In view of \eqref{equ}, this corresponds to the pushforward of $\Lambda$ under the open inclusion 
$j:(\bC^*)^n \hookrightarrow \bP^n$. Let $\phi:T^*\bP^n \to T^*(\bP^n,D)$ be the natural bundle map appearing in diagram \eqref{diag:factorpsi}. Then we have by Theorem \ref{thm_main}, Proposition \ref{pmc}, Lemma \ref{lemma_functoriality} and \eqref{cc} the following equalities in $A_*(\bP^n)$:
\begin{align*}
c_*^{T^*(\bP^n,D)}(\overline{\Lambda}^{\log})
=& c_*^{T^*\bP^n}(\phi^* \overline{\Lambda}^{\log}) \\
=& (-1)^d \cdot c_*^{T^*\bP^n}(CC(Eu_Z)) \\
=& (-1)^d \cdot \check{c}_{Ma}(Z).
\end{align*}
This is equivalent to saying that
\[
c_{Ma}(Z)=\sum_{i=0}^d (-1)^{d-i}v_i [\bP^i] \in A_*(\bP^n),
\]
which finishes the proof of Theorem \ref{thm_CSM}. 
\end{proof}

The next result, Theorem~\ref{thm_MLb}, is a stepping stone towards proving the Huh-Sturmfels conjecture of Theorem~\ref{thm:ML-involution}.
\begin{proof}[Proof of Theorem \ref{thm_MLb}]
First, we consider the compactification $(\C^*)^n\subset \bP^n$, with homogeneous coordinates $[p_1, \ldots, p_n, p_+]$ such that $x_i=\frac{p_i}{p_+}$. Then the hyperplane $\sH\subset \bP^n$ is defined by $p_+-p_1-\cdots -p_n=0$, or equivalently, $p_0=0$ with $p_+=p_0+\cdots+p_n$. 

Let $U=(\bC^*)^n\setminus \mathcal{H}$, and let $X=\bP^n$. Then the boundary divisor $D=X\setminus U$ is equal to the union of all coordinate hyperplanes and $\mathcal{H}$, which is a simple normal crossing divisor. Notice that for any ${{u}}\in \C^{n+1}$, the holomorphic 1-form 
\[
d\log l_{{u}}=u_1d\log x_1+\cdots +u_nd\log x_n+ u_0 d\log(1-x_1-\cdots -x_n)
\]
on $U$ extends to a logarithmic $1$-form on $X$, that is, a global section of $\Omega^1_X(\log D)$. Moreover, since the mixed Hodge structure on $H^1(U, \bQ)$ is of $(1, 1)$-type, there is a natural isomorphism 
\[
H^1(U, \C)\cong H^0(X, \Omega^1_X(\log D)),
\]
and hence $\dim H^0(X, \Omega^1_X(\log D))=n+1$. On the other hand, the $1$-forms $d\log l_{{u}}$ form a vector space of dimension $n+1$. Thus, there is a one-to-one correspondence between the $1$-forms $d\log l_{{u}}$ and the global sections of $\Omega^1_X(\log D)$. Under this correspondence, $H^0(X, \Omega^1_X(\log D))$ has a natural basis, given by 
\[
d\log x_1, \, \ldots,\,  d\log x_n,
\,\, \text{and}\,\, 
d\log(1-x_1-\cdots -x_n).
\]

Since the vector bundle $\Omega^1_X(\log D)$ is globally generated, the evaluation map 
\[
H^0(X, \Omega^1_X(\log D))\otimes_{\C} \cO_X\to \Omega^1_X(\log D)
\]
is surjective. Identifying $H^0(X, \Omega^1_X(\log D))$ with $\C^{n+1}$ using the above basis, we denote the corresponding map on the total spaces by 
\[
\rho: X\times \C^{n+1}\to T^*(X, D).
\]

The likelihood correspondence $\sL_Y$ is defined as the closure in $\bP^n \times \bP^{n+1}$ of
\[
\sL_Y^\circ\coloneqq\left\{({p}, {u})\in Y^\circ_{\textrm{reg}} \times \C^{n+1}\mid {p} \text{ is a critical point of } l_{{u}}|_{Y^\circ_{\textrm{reg}}}\right\},
\]
where $Y^\circ=Y\setminus \sH$. Thus, by definition, we have
\[
\sL_Y^\circ=\rho^{-1}\left(T^*_{Y^\circ_{\textrm{reg}}}U\right).
\]
Taking closure in $X\times \bC^{n+1}=\bP^{n}\times \bC^{n+1}$, we have
\begin{equation}\label{eq_LY}
\sL_Y\cap (\bP^{n}\times \bC^{n+1})=\rho^{-1}\left(\overline{T^*_{Y^\circ} U}^{\log}\right).
\end{equation}
Since $\bP^n\times \C^{n+1}$ is a trivial vector bundle over $\bP^n$, its fiber-wise compactification is equal to $\bP^n\times \bP^{n+1}$. Since $\sL_Y\cap (\bP^n \times \bC^{n+1})$ is a conic subvariety of $\bP^n\times \C^{n+1}$ and its closure in $\bP^n\times \bP^{n+1}$ is equal to $\sL_Y$, we get by \eqref{sh} that, if
\[
c_*^{\bP^n\times \C^{n+1}}\big(\sL_Y\cap (\bP^{n}\times \bC^{n+1})\big)=\sum_{i=0}^{\dim Y}\alpha_i[\bP^i] \in A_*(\bP^n),
\]
then
\[
[\sL_Y]=\sum_{i=0}^{\dim Y}\alpha_i[\bP^i\times \bP^{n+1-i}]\in A_*(\bP^n\times \bP^{n+1}). 
\]
Now, considering $Eu_{Y^\circ}$ as a constructible function on $X=\bP^n$ with value zero outside $Y^\circ$ and denoting its {signed Chern-Schwartz-MacPherson class by $\check{c}_{Ma}(Y^\circ) \in A_*(\bP^n)$,} we have
\begin{align*}
    c_*^{\bP^n\times \C^{n+1}}\big(\sL_Y\cap (\bP^{n}\times \bC^{n+1})\big)=&c_*^{T^*(X, D)}\left(\overline{T^*_{Y^\circ} U}^{\log}\right)\\
    =&(-1)^{\dim Y} \cdot \check{c}_{Ma}(Y^\circ),
\end{align*}
where the first equality follows from \eqref{eq_LY} and Lemma \ref{lemma_functoriality}, and the second equality follows from Theorem \ref{thm_main}, Proposition \ref{pmc} and \eqref{cc}. Finally, combining the above three displayed equations, we can conclude Theorem \ref{thm_MLb}. 
\end{proof}

We are now ready to prove the Huh-Sturmfels conjecture.
\begin{proof}[Proof of Theorem \ref{thm:ML-involution}]
Let $\varphi=Eu_{Y^\circ}$, considered as a constructible function on $\bP^n$. By Theorem \ref{thm_MLb} and Corollary \ref{cor_RW}, respectively, we have
\[
B_Y(\pp, \uu)=(-1)^{\dim Y}\gamma_{\varphi}\left(-\frac{\uu}{\pp}\right)\pp^n,\quad\text{and}\quad
S_Y(\pp, \uu)=(-1)^{\dim Y}\chi_{\varphi}\left(\frac{\uu}{\pp}\right)\pp^n,
\]
with $\gamma_\varphi$ and $\chi_\varphi$ as defined in Section \ref{inv}. 
By Corollary \ref{cor_Aluffi}, we have
\begin{align*}
    S_Y(\pp, \uu)=&(-1)^{\dim Y}\chi_{\varphi}\left(\frac{\uu}{\pp}\right)\pp^n\\
    =&(-1)^{\dim Y}\frac{\frac{\uu}{\pp}\gamma_{\varphi}(-\frac{\uu}{\pp}-1)+\gamma_{\varphi}(0)}{\frac{\uu}{\pp}+1}\pp^n\\
    =&(-1)^{\dim Y}\frac{{\uu}\gamma_{\varphi}(-\frac{\uu+\pp}{\pp})+\pp\gamma_{\varphi}(0)}{{\uu}+\pp}\pp^n\\
    =&\frac{\uu}{\uu+\pp}B_Y(\pp, \uu+\pp)+\frac{\pp}{\uu+\pp}B_Y(\pp, 0),
\end{align*}
which is one of the involution formulas. The other involution formula can be checked by similar computations. \end{proof}

Now we give two examples of the Huh-Sturmfels involution.
\begin{ex}[Singular cubic $3$-folds]
For a focused family of examples, we choose the following singular cubic  $3$-folds in 
$(\bC^*)^5$. 
We assume $X$ is defined by $f_1=x_1+\cdots+x_5-1$ and  a cubic polynomial $f_2$ of the form
\[
f_2= (x_2+x_3+x_4+x_5)^3-L\cdot(x_1+x_2+x_3+x_4)^2
\]
where $L$ is some linear polynomial. The singular locus of $X$ is independent of $L$; it is 
\[
V\left(x_1+\cdots +x_5-1, \, 
x_2 + x_3 + x_4 + x_5, \,
x_1 - x_3 - x_4 - x_5 \right)\cap (\mathbb{C}^*)^5.
\]

\begin{table}[htb!]\label{table:cubic-three-fold}
    \centering
    \begin{tabular}{c|Hc}
%    $L$& sing. locus $X$ & $B_{Y_k}$ and $S_{Y_k}$ \\
    $L$& sing. locus $X$ & $B_{Y_k}$ and $S_{Y_k}$ \\
    \hline

    $2\,x_{1}+3\,x_{2}+5\,x_{3}+7\,x_{4}$    
    &$\left\{\texttt{ideal}{}\left(p{3}-\textit{ps},\,p{1}+p{2}+p{4}+\textit{ps},\,p{0}-\textit{ps}\right)\right\}$
    &$19\,\pp^{4}+15\,\pp^{3}\uu+9\,\pp^{2}\uu^{2}+3\,\pp\,\uu^{3}$\\
    &&$19\,\pp^{4}+27\,\pp^{3}\uu+15\,\pp^{2}\uu^{2}+3\,\pp\,\uu^{3}$\\
    \hline

    $x_{0}+x_{1}$    
    &$\left\{\texttt{ideal}{}\left(p{3}-\textit{ps},\,p{1}+p{2}+p{4}+\textit{ps},\,p{0}-\textit{ps}\right)\right\}$
    &$11\,\pp^{4}+12\,\pp^{3}\uu+9\,\pp^{2}\uu^{2}+3\,\pp\,\uu^{3}$\\
    &&$11\,\pp^{4}+24\,\pp^{3}\uu+15\,\pp^{2}\uu^{2}+3\,\pp\,\uu^{3}$\\
    \hline

    $x_{0}$    
    &$\left\{\texttt{ideal}{}\left(p{3}-\textit{ps},\,p{1}+p{2}+p{4}+\textit{ps},\,p{0}-\textit{ps}\right)\right\}$
    &$6\,\pp^{4}+6\,\pp^{3}\uu+6\,\pp^{2}\uu^{2}+3\,\pp\,\uu^{3}$\\
    &&$6\,\pp^{4}+15\,\pp^{3}\uu+12\,\pp^{2}\uu^{2}+3\,\pp\,\uu^{3}$\\
    \hline

    $x_{1}$    
    &$\left\{\texttt{ideal}{}\left(p{3}-\textit{ps},\,p{1}+p{2}+p{4}+\textit{ps},\,p{0}-\textit{ps}\right)\right\}$
    &$6\,\pp^{4}+8\,\pp^{3}\uu+7\,\pp^{2}\uu^{2}+3\,\pp\,\uu^{3}$\\
    &&$6\,\pp^{4}+18\,\pp^{3}\uu+13\,\pp^{2}\uu^{2}+3\,\pp\,\uu^{3}$\\
    \hline

    $x_{4}$    
    &$\left\{\texttt{ideal}{}\left(p{3}-\textit{ps},\,p{1}+p{2}+p{4}+\textit{ps},\,p{0}-\textit{ps}\right)\right\}$
    &$5\,\pp^{4}+7\,\pp^{3}\uu+7\,\pp^{2}\uu^{2}+3\,\pp\,\uu^{3}$\\
    &&$5\,\pp^{4}+17\,\pp^{3}\uu+13\,\pp^{2}\uu^{2}+3\,\pp\,\uu^{3}$\\
    \hline

    $x_1+x_2+x_3+x_4+x_5$    
    &$\left\{\texttt{ideal}{}\left(p{3}-\textit{ps},\,p{1}+p{2}+p{4}+\textit{ps},\,p{0}-\textit{ps}\right)\right\}$
    &$5\,\pp^{4}+5\,\pp^{3}\uu+5\,\pp^{2}\uu^{2}+3\,\pp\,\uu^{3}$\\
    &&$5\,\pp^{4}+13\,\pp^{3}\uu+11\,\pp^{2}\uu^{2}+3\,\pp\,\uu^{3}$\\
    \hline
    
    \end{tabular}
    \caption{ML Bidegrees and sectional degrees of singular cubics}
    \label{tab:my_label}
\end{table}

\end{ex}

\begin{ex}[Independence models]
The next example is motivated by statistics.
Consider the variety of order $k$ 
rank one $2\times 2\times\cdots \times 2$ tensors, which we  denote by $Y_k$. In algebraic statistics this variety is known as an independence model, and is known to have ML degree one.

We use numerical algebraic geometry to determine the ML bidegrees of $Y_k$. The results are recorded in Table~\ref{table:rank-one-tensors}.
Specifically, we use a parameterization and a numerical implementation of \cite[Algorithm 18]{HKS2005}.
By Theorem~\ref{thm:ML-involution}, we also find the sectional ML degrees.
In this example, computing the sectional ML degrees using numerical computation is more difficult because we don't have a nice parameterization of $Y_k\cap L$ where $L$ is a generic linear space.
In general, our computations and the \href{https://oeis.org/search?q=1\%2C4\%2C15\%2C64&language=english&go=Search}{OEIS} suggest these formulas:
\[
B_{Y_k}(\pp,\uu) = \pp^{n} + 
\sum _{i=1}^k \left(
i! \, \binom{k}{i}   \pp^{n-i}\uu^i 
\right)
\quad \text{ and }\quad
S_{Y_k}(\pp,\uu) = \pp^{n} + 
\sum _{i=1}^k i! \, \binom{k}{i}  \cdot \pp^{n-1}\uu+\cdots.
\]
\begin{table}[htb!]\label{table:rank-one-tensors}
    \centering
    \begin{tabular}{c|c|c}
    $k$ =$\dim(Y_k)$ &$n=2^k-1$ & $B_{Y_k}$ and $S_{Y_k}$ \\
    \hline

    $2$ & $4$ &
        $\pp^{3}+2\,\pp^{2}\uu+2\,\pp\,\uu^{2}$\\
    &&  $\pp^{3}+4\,\pp^{2}\uu+2\,\pp\,\uu^{2}$\\
    \hline
    
    $3$  & $8$ & 
         $\pp^7+3\pp^6\uu+6\pp^5\uu^2+6\pp^4\uu^3$\\
    && $\pp^7+15\pp^6\uu+18\pp^5\uu^2+6\pp^4\uu^3$\\
    \hline

    $4$ & $16$ &  
      $\pp^{15}+4\pp^{14}\uu+12\pp^{13}\uu^2+24\pp^{12}\uu^3+24\pp^{11}\uu^{4}$\\
    &&$\pp^{15}+64\pp^{14}\uu+132\pp^{13}\uu^2+96\pp^{12}\uu^3+24\pp^{11}\uu^4$\\
    \hline
    \end{tabular}
    \caption{ML Bidegree for rank one tensors}
    \label{tab:my_label}
\end{table}
\end{ex}

%%%%%%%%%%%%%%%%%%%%%%%%%%%%%

\bibliographystyle{abbrv}

\begin{thebibliography}{AD}

\bibitem{Al1} Aluffi, P., {\it Differential forms with logarithmic poles and Chern-Schwartz-MacPherson classes of singular varieties}, C. R. Acad. Sci. Paris S\'er. I Math., 329 (1999), 619--624.

\bibitem{Alu} Aluffi, P., {\it Shadows of blow-up algebras}, Tohoku Math. J. 56 (2004), 593--619.

\bibitem{Al2} Aluffi, P., {\it
Limits of Chow groups, and a new construction of Chern-Schwartz-MacPherson classes}, 
Pure Appl. Math. Q. 2 (2006), no. 4, Special Issue: In honor of Robert D. MacPherson. Part 2, 915--941.

\bibitem{Alu2} Aluffi, P., {\it Euler characteristics of general linear sections and polynomial Chern classes}, Rend. Circ. Mat. Palermo 62 (2013), 3--26.

\bibitem{AMSS} Aluffi, P., Mihalcea, L., Sch\"urmann, J.,  Su, C., {\it Shadows of characteristic cycles, Verma modules, and positivity of Chern-Schwartz-MacPherson classes of Schubert cells}, arXiv:1709.08697.

\bibitem{Di}  Dimca, A., {\it Sheaves in Topology}, Universitext, Springer-Verlag, Berlin, 2004.

\bibitem{FK} Franecki, J., Kapranov, M., {\it The Gauss map and a noncompact Riemann-Roch formula for constructible sheaves on semiabelian varieties}, Duke Math. J. 104 (2000), no. 1, 171--180.

\bibitem{Ful} Fulton, W., {\it Intersection theory}, Springer-Verlag, 1998.


\bibitem{Gin} Ginsburg, V., {\it Characteristic varieties and vanishing cycles}, Invent. Math., 84 (1986), 327–-402.

\bibitem{HKS2005}
S.~Ho\c{s}ten, A.~Khetan, and B.~Sturmfels.
\newblock Solving the likelihood equations.
\newblock {\em Found. Comput. Math.}, 5(4):389--407, 2005.

\bibitem{Huh} Huh, J., {\it The maximum likelihood degree of a very affine variety}, Compositio Math. 149 (2013), 1245–-1266.

\bibitem{HS} Huh, J., Sturmfels, B., {\it 
Likelihood geometry}, 
In ``Combinatorial algebraic geometry'',  
  Lecture Notes in Math., vol. 2108, 63--117, Springer, Cham, 2014.


%\bibitem{Kas} Kashiwara, M., {\it Index theorem for constructible sheaves}, Differential systems and singularities (Luminy, 1983), Ast\'erisque No. {130} (1985), 193--209.

\bibitem{KS} Kashiwara, M.,  Schapira, P., {\it Sheaves on manifolds}, Springer-Verlag, Berlin, 1990.

\bibitem{MP0} MacPherson, R., \emph{Chern classes for singular algebraic varieties}, Ann. of Math.  {100}  (1974), 423--432.

\bibitem{MS} Maxim, L., Sch\"urmann, J., \emph{Constructible sheaf complexes in complex geometry and Applications}, in Handbook of Geometry and Topology of Singularities, vol. III (in press), Springer; 
arXiv:2105.13069.

\bibitem{RW1}
Rodriguez, J.~I., Wang, B., 
{\it The maximum likelihood degree of mixtures of independence models},
SIAM J. Appl. Algebra Geom., 1 (2017), no. 1, 484--506.

%\bibitem{RW2} Rodriguez, J.~I., Wang, B., {\it Computing Euler obstruction functions using maximum likelihood degrees}, Int. Math. Res. Not. IMRN, 2020, 6699--6712.

\bibitem{Sab} Sabbah, C., {\it Quelques remarques sur la g\'eom\'etrie des espaces conormaux}, in 
``Differential systems and singularities'' (Luminy, 1983).
Ast\'erisque No. 130 (1985), 161--192.

\bibitem{S} Sch\"urmann, J., \emph{Topology of singular spaces and constructible sheaves}, Birkh\"auser Verlag, Basel, 2003.

\bibitem{WZ} Wu, L., Zhou, P.,
{\it Log D-modules and index theorems}, 
Forum Math. Sigma 9 (2021), Paper No. e3, 32 pp.

\end{thebibliography}

\end{document}